\newcommand\reallywidehat[1]{%
\savestack{\tmpbox}{\stretchto{%
  \scaleto{%
    \scalerel*[\widthof{\ensuremath{#1}}]{\kern-.6pt\bigwedge\kern-.6pt}%
    {\rule[-\textheight/2]{1ex}{\textheight}}%WIDTH-LIMITED BIG WEDGE
  }{\textheight}% 
}{0.5ex}}%
\stackon[1pt]{#1}{\tmpbox}%
}
\newcommand{\bC}{\mathbb{C}}
\newcommand{\bZ}{\mathbb{Z}}
\newcommand{\cD}{\mathcal{D}}
\newcommand{\abs}[1]{|#1|}
\newcommand{\norm}[1]{{\left\Vert#1\right\Vert}}
\DeclareSymbolFont{yhlargesymbols}{OMX}{yhex}{m}{n} 
\DeclareMathAccent{\yhwidehat}{\mathord}{yhlargesymbols}{"62}
\numberwithin{equation}{section}
\theoremstyle{plain}
\newtheorem{theorem}{Theorem}[section]
\newtheorem{lemma}[theorem]{Lemma}
\newtheorem{corollary}[theorem]{Corollary}
\newtheorem{proposition}[theorem]{Proposition}
\theoremstyle{definition}
\newtheorem{definition}[theorem]{Definition}
\theoremstyle{remark}
\newtheorem{remark}[theorem]{Remark}
\newtheorem{case[theorem]}{Case}
\def\red{\textcolor{red}}
\date{\today}      
\author{G. Garza, K. Gurevich, A. Iosevich, A. Mayeli, K. Nguyen, and N. Shaffer} 
\address{Department of Mathematics, University of Rochester, Rochester, NY}
\email{giogarza0207@gmail.com}
\address{Department of Physics and Astronomy, University of Rochester, Rochester, NY}
\email{kgurevi2@u.rochester.edu}
\address{Department of Mathematics, University of Rochester, Rochester, NY}
\email{iosevich@gmail.com}
\address{Department of Mathematics, CUNY Graduate Center, New York, NY}
\email{azitamayeli@gmail.com} 
\address{Department of Mathematics, University of Rochester, Rochester, NY}
\email{knguy43@u.rochester.edu}
\address{Department of Mathematics, University of Rochester, Rochester, NY}
\email{nshaffe4@u.rochester.edu}
\thanks{A.I. was supported in part by the National Science Foundation under grant no. 2154232. A.M. was supported in part by AMS-Simons Research Enhancement Grant, Simon Fellowship,  and the PSC-CUNY research grants. The authors wish to thank the Isaac Newton Institute (INI) in Cambridge, Great Britain, for their support and hospitality. A significant part of this work was completed during the INI program, entitled "Multivariate approximation, discretization, and sampling recovery".}
\begin{document}

% \title[Signal recovery]{Signal recovery and harmonic analysis}
%\title{On the interplay of sparse signal Recovery and Restriction Theorems on Finite Abelian Groups}
\title[Effective support and signal recovery]{Effective support, Dirac combs, and signal recovery}  

\begin{abstract} Let  
 $f: {\mathbb Z}_N^d \to {\mathbb C}$ be  a signal 
with the Fourier transform $\widehat{f}: \Bbb Z_N^d\to \Bbb C$. 
A classical result due to Matolcsi and Szucs (\cite{MS73}), and, independently, to Donoho and Stark (\cite{DS89}) states  if a subset of  frequencies ${\{\widehat{f}(m)\}}_{m \in S}$ of $f$ are unobserved due to noise or other interference, 
\begin{comment}
and the Fourier transform 
$$ \widehat{f}(m)=N^{-d} \sum_{x \in {\mathbb Z}_N^d} \chi(-x \cdot m) f(x), \ \chi(t)=e^{\frac{2 \pi i t}{N}}$$ is transmitted with frequencies ${\{\widehat{f}(m)\}}_{m \in S}$ unobserved due to noise or other interference, 
\end{comment}
then $f$ can be recovered exactly and uniquely provided that 
$$ |E| \cdot |S|<\frac{N^d}{2},$$ where $E$ is the support of $f$, i.e.,  $E=\{x \in {\mathbb Z}_N^d: f(x) \not=0\}$. In this paper, we consider signals that are Dirac combs of complexity $\gamma$, meaning they have the form $f(x)=\sum_{i=1}^{\gamma} a_i 1_{A_i}(x)$, where the sets $A_i \subset {\mathbb Z}_N^d$ are disjoint, $a_i$ are complex numbers,  and $\gamma \leq N^d$. We will define the concept of effective support of these signals and show that if $\gamma$ is not too large, a good recovery condition can be obtained by pigeonholing under additional reasonable assumptions on the distribution of values. Our approach produces a non-trivial uncertainty principle and a signal recovery condition in many situations when the support of the function is too large to apply the classical theory. 
\end{abstract}  

\maketitle
%\href{https://www.mathprograms.org/db/programs/1661}{Undergraduate Travel Grants to JMM 2025 - Application %Deadline: 2024/10/17 11:59PM ET}

\tableofcontents
\addtocontents{toc}{\protect\setcounter{tocdepth}{1}}
\section{Introduction} 

Let $f: {\mathbb Z}_N^d \to {\mathbb C}$ be a function, to be henceforth called a signal, and suppose that $f$ is transmitted by the Fourier transform as 
$$ \widehat{f}(m)=N^{-d} \sum_{x \in {\mathbb Z}_N^d} \chi(-x \cdot m) f(x).$$
Assume that 
 the subset of frequencies 
$$ {\{\widehat{f}(m)\}}_{m \in S}$$ left unobserved, 
 either intentionally due to cost or unintentionally due to noise or other interference.  See, for example, \cite{Babai2002} for a description of basic facts about the discrete Fourier transform and its properties. The question we ask is, under what additional reasonable assumptions can the signal $f$ be recovered uniquely? Donoho and Stark in their seminal paper (\cite{DS89}) proved that if $f$ is supported in $E \subset {\mathbb Z}_N^d$, then unique recovery is achieved if 
\begin{equation} \label{recoverycondition} |E| \cdot |S|< \frac{N^d}{2}. \end{equation} 

More precisely, Donoho and Stark established this result in the case $d=1$, thus re-discovering a result due to Matolcsi and Szucs \cite{MS73}. The main tool they used was the classical Fourier Uncertainty Principle which says that if $f$ is supported in $E \subset {\mathbb Z}_N^d$, and $\widehat{f}$ is supported in $\Sigma$, then 
$$ |E| \cdot |\Sigma| \ge N^d. $$

More precisely, Donoho and Stark established this result in the case $d=1$, but the proof is the same in higher dimensions. 

A subset of the authors of this paper proved in \cite{IM24} that if the set of unobserved frequencies $S$ satisfies a non-trivial Fourier restriction estimate, then a stronger Fourier uncertainty principle holds, which results in a less restrictive recovery condition in place of (\ref{recoverycondition}) above (see Theorem \ref{mainUPwithRT} below).  

The purpose of this paper is to extend the Dononoho-Stark (\cite{DS89}) and Matolcsi-Szucs (\cite{MS73}) results to the setting of Dirac combs of complexity $\gamma$ (see Definition \ref{diraccombdef} below), generalizing indicator functions of sets. We are going to see that in the presence of low complexity, the pigeon-hole principle allows us to identify the effective mass and support (see Definition \ref{effectivemassandsupportdef} below) of a signal in a straightforward way. This, in turn, leads to an uncertainty principle with the assumption based on the effective support, and the resulting signal recovery condition, which is often better than the Donoho-Stark (\cite{DS89}) and Matolcsi-Szucs (\cite{MS73}) results would imply. 

\vskip.125in

\section{Notations and Preliminaries}
\begin{theorem}[Plancherel]\label{mainUPwithRT} Given a signal \(f\) and its Fourier Transform \(\widehat{f},\) the Plancherel identity is given by 
        \begin{equation}
\sum_{x\in\mathbb{Z}_N^d}\left|f(x)\right|^2=N^d\sum_{m\in\mathbb{Z}_N^d}|\widehat{f}(m)|^2.
        \end{equation}
\end{theorem}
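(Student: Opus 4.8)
The plan is to verify the identity by expanding the right-hand side and invoking the orthogonality of the additive characters $m \mapsto \chi(v \cdot m)$ on $\mathbb{Z}_N^d$. First I would write $|\widehat{f}(m)|^2 = \widehat{f}(m)\overline{\widehat{f}(m)}$ and substitute the definition of the Fourier transform into each factor. Using $\overline{\chi(t)} = \chi(-t)$, the conjugated transform becomes $\overline{\widehat{f}(m)} = N^{-d}\sum_{y}\chi(y\cdot m)\overline{f(y)}$, so that forming the product of the two sums introduces a double sum over $x,y \in \mathbb{Z}_N^d$.

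After multiplying out and summing over $m$, I would interchange the order of summation, which is legitimate since all sums are finite, in order to isolate the character sum. Concretely, this produces
$$\sum_{m\in\mathbb{Z}_N^d}|\widehat{f}(m)|^2 = N^{-2d}\sum_{x,y\in\mathbb{Z}_N^d} f(x)\overline{f(y)}\sum_{m\in\mathbb{Z}_N^d}\chi((y-x)\cdot m).$$

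The crux of the argument, and the only step requiring genuine computation, is the orthogonality relation asserting that $\sum_{m\in\mathbb{Z}_N^d}\chi(v\cdot m)$ equals $N^d$ if $v = 0$ and equals $0$ otherwise. I would prove this by factoring $\chi(v\cdot m) = \prod_{j=1}^{d} e^{2\pi i v_j m_j / N}$ and summing each coordinate as a finite geometric series: the $j$-th factor contributes $N$ when $v_j \equiv 0 \pmod{N}$, and otherwise vanishes, since $e^{2\pi i v_j/N}$ is then a nontrivial $N$-th root of unity whose powers sum to zero. Applying this with $v = y - x$ collapses the double sum to its diagonal $x = y$.

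Substituting the orthogonality relation leaves $\sum_m |\widehat{f}(m)|^2 = N^{-2d}\cdot N^d \sum_{x} |f(x)|^2 = N^{-d}\sum_x |f(x)|^2$, and multiplying through by $N^d$ yields the stated Plancherel identity. I expect no serious obstacle here: the entire difficulty is concentrated in the character orthogonality, which is elementary over a finite abelian group. The only points demanding care are the bookkeeping of the normalization factor $N^{-d}$ appearing in the definition of $\widehat{f}$ and the correct conjugation of $\chi$.
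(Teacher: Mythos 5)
Your proof is correct: the expansion of $|\widehat{f}(m)|^2$, the interchange of finite sums, and the coordinate-wise geometric-series proof of the orthogonality relation $\sum_{m\in\mathbb{Z}_N^d}\chi(v\cdot m)=N^d\cdot 1_{\{v=0\}}$ all go through, and your bookkeeping of the normalization $\widehat{f}(m)=N^{-d}\sum_x \chi(-x\cdot m)f(x)$ correctly yields $\sum_x|f(x)|^2=N^d\sum_m|\widehat{f}(m)|^2$. Note that the paper states this Plancherel identity as a classical preliminary without giving any proof, so there is no in-paper argument to compare against; your argument is the standard one that the authors implicitly rely on.
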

\begin{definition} \label{restrictiondef} 
Let $S \subset {\mathbb Z}_N^d$. We say that a $(p,q)$-restriction estimation ($1 \leq p \leq q\leq \infty$) holds for $S$ if there exists a uniform constant  $C_{p,q}$ (independent of $N$ and $S$) such that  for any function $f:\Bbb Z_N^d\to \Bbb C$
\begin{equation} \label{restrictionequation} {\left( \frac{1}{|S|} \sum_{m \in S} {|\widehat{f}(m)|}^q \right)}^{\frac{1}{q}} \leq C_{p,q} N^{-d} {\left( \sum_{x \in {\mathbb Z}_N^d} {|f(x)|}^p \right)}^{\frac{1}{p}}. \end{equation} 
\end{definition} 
When $p$ or $q$ is infinity,  we replace  the  norm  by the supremum norm  
$$\|\widehat f \|_{\infty}=\max\{|\widehat f(m)|\}_{m\in S}.$$

See, for example, \cite{HW18,IK08,IK10, IK10b,Mockenhaupt2000,MT04,KP22} for a variety of Fourier restriction results in the discrete setting. 

\vskip.125in 

The authors in \cite{IM24}   established the following results.
\begin{theorem}[An uncertainty Principle via Restriction Estimation I, \cite{IM24}]\label{mainUPwithRT} Suppose that $f: {\mathbb Z}_N^d\to \Bbb C$ is supported in $E \subset {\mathbb Z}_N^d$, and $\widehat{f}:\Bbb Z_N^d\to \Bbb C$ is supported in $\Sigma \subset {\mathbb Z}_N^d$. Suppose that the restriction estimation  \eqref{restrictionequation} holds for $\Sigma$ for a pair $(p,q)$, $1\leq p\leq q<\infty$.  Then 
\begin{equation} \label{UPwithRTequation} {|E|}^{\frac{1}{p}} \cdot |\Sigma| \ge \frac{N^d}{C_{p,q}}. \end{equation} 
\end{theorem}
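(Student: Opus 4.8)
The plan is to run the restriction estimate against Fourier inversion, pivoting on the sup-norm of $f$ rather than on Plancherel. First I would record the inversion formula in this normalization: since $\widehat{f}$ is supported in $\Sigma$, for every $x\in\mathbb{Z}_N^d$ we have $f(x)=\sum_{m\in\Sigma}\chi(x\cdot m)\widehat{f}(m)$, and since $|\chi(x\cdot m)|=1$ the triangle inequality gives $|f(x)|\le\sum_{m\in\Sigma}|\widehat{f}(m)|$ uniformly in $x$. This reduces everything to controlling $\sum_{m\in\Sigma}|\widehat{f}(m)|$, i.e. the $\ell^1(\Sigma)$-mass of $\widehat{f}$.

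Next I would apply H\"older's inequality on the finite set $\Sigma$ to pass from the $\ell^1$ sum to the $\ell^q$ sum, $\sum_{m\in\Sigma}|\widehat{f}(m)|\le|\Sigma|^{1-1/q}\big(\sum_{m\in\Sigma}|\widehat{f}(m)|^q\big)^{1/q}$, which is legitimate for any $q\ge1$. Rewriting the last factor as $|\Sigma|^{1/q}\big(\tfrac{1}{|\Sigma|}\sum_{m\in\Sigma}|\widehat{f}(m)|^q\big)^{1/q}$ and invoking the $(p,q)$-restriction estimate \eqref{restrictionequation} for $\Sigma$, the two powers of $|\Sigma|$ combine to $|\Sigma|^{1-1/q}\cdot|\Sigma|^{1/q}=|\Sigma|$. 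This yields the clean bound
\[
\max_{x}|f(x)|\ \le\ C_{p,q}\,N^{-d}\,|\Sigma|\left(\sum_{x\in E}|f(x)|^p\right)^{1/p}.
\]
I would stress that it is precisely the cancellation of the $|\Sigma|^{1/q}$ factors that forces the exponent on $|\Sigma|$ to be $1$, matching \eqref{UPwithRTequation}.

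Finally I would supply a matching lower bound for the sup-norm that uses only the support of $f$ in $E$: since at most $|E|$ terms are nonzero, $\sum_{x\in E}|f(x)|^p\le|E|\,(\max_x|f(x)|)^p$, hence $\big(\sum_{x\in E}|f(x)|^p\big)^{1/p}\le|E|^{1/p}\max_x|f(x)|$. Chaining this with the previous display and cancelling the common factor $\big(\sum_{x\in E}|f(x)|^p\big)^{1/p}$, which is nonzero because $f\not\equiv0$, gives $1\le C_{p,q}N^{-d}|\Sigma|\,|E|^{1/p}$, which rearranges to the claimed inequality $|E|^{1/p}|\Sigma|\ge N^d/C_{p,q}$.

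There is no serious obstacle here; the argument is a short chain of H\"older-type estimates. The one genuine decision — and the step I expect to matter most — is routing the proof through $\max_x|f(x)|$ via inversion rather than through Plancherel and the $\ell^2$ norm. The $\ell^2$ route forces a case split according to whether $q\gtrless2$ (to compare the $\ell^2$ and $\ell^q$ means on $\Sigma$) and lands on a different, misshapen exponent on $|E|$, whereas the sup-norm route works uniformly for all $1\le p\le q<\infty$ and reproduces \eqref{UPwithRTequation} exactly. The only routine points to verify are the normalization of the inversion formula, so that no stray power of $N$ survives, and the standing assumption $f\not\equiv0$.
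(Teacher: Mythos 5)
Your proof is correct and follows essentially the same route as the paper's argument (given there for the more general Dirac-comb version, Theorem \ref{stepfunctionUPwithrestriction}, which reduces to this statement when $\gamma=1$): Fourier inversion, H\"older on $\Sigma$ to reach the normalized $\ell^q$ mean, the $(p,q)$-restriction estimate, and then cancellation of the nonzero factor $\bigl(\sum_{x\in E}|f(x)|^p\bigr)^{1/p}$. The only cosmetic difference is that you pivot on $\max_x|f(x)|$ while the paper raises the pointwise bound to the $p$th power and sums over the support, which produces the identical inequality.
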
 

Let $G$ be a compact abelian group  with a Haar measure $m$ and let $\Gamma$ denote its dual group equipped by a Haar measure $\widehat{m}$. Let $q>2$. A celebrated result due to Jean Bourgain (\cite{Bourgain89}) says the following. 
  
\begin{theorem} \label{bourgaintheorem} Let $\Psi=(\psi_1, \dots, \psi_n)$ denote a sequence of $n$ mutually orthogonal functions (with respect to the $L^2(G)$ norm) uniformly bounded by $1$, i.e.,  ${||\psi_i||}_{L^{\infty}(G)} \leq 1$,  $1\leq i\leq n$.  Let $2<q<\infty$. There exists a subset $S$ of $\{1,2, \dots, n\}$, $|S|>n^{\frac{2}{q}}$ such that 
$$ {\left|\left| \sum_{i \in S} a_i \psi_i \right|\right|}_{L^q(G)} \leq C(q) {\left( \sum_{i \in S} {|a_i|}^2 \right)}^{\frac{1}{2}},$$ 
for all finite scalar sequences $\{a_i\}$.

Here, the constant $C(q)$ depends only on $q$ and the estimate above holds for a generic set of size $\left\lceil n^{\frac{2}{q}} \right\rceil$, where $\lceil x\rceil$ denotes the smallest integer greater than $x$. Moreover, the estimate holds for a random index subset $S$ of size $\left\lceil n^{\frac{2}{q}} \right\rceil$ with probability $1-o_N(1)$. 
\end{theorem}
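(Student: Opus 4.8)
The plan is to produce $S$ by random selection rather than explicit construction; this is what makes the generic-set and random-subset refinements come out for free. I would include each index $i\in\{1,\dots,n\}$ in $S$ independently with probability $\delta=n^{\frac{2}{q}-1}$, writing $\xi_1,\dots,\xi_n$ for the associated $\{0,1\}$ selectors, so that $\bE|S|=\delta n=n^{2/q}$. A Chernoff bound (oversampling by a constant factor if one wants the lower tail negligible) gives $|S|\ge\lceil n^{2/q}\rceil$ with probability $1-o_N(1)$, and since the target inequality is inherited by every subset of $S$ (set the omitted coefficients to zero) we may then trim $S$ to cardinality exactly $\lceil n^{2/q}\rceil$; the transfer between this Bernoulli model and the uniform model on sets of fixed size is routine and yields the final sentence of the statement. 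Note that $2<q<\infty$ is exactly what makes $\delta<1$ and $n^{2/q}\to\infty$, so the construction is non-degenerate and concentration is available.

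After homogenizing in $a$, the estimate to prove on a set of selectors of probability $1-o_N(1)$ is the uniform bound
\[
Q(\xi):=\sup_{{\|a\|}_{\ell^2}\le 1}{\left\|\sum_{i=1}^n \xi_i\,a_i\psi_i\right\|}_{L^q(G)}\le C(q).
\]
I would control $\bE\,Q(\xi)$ first and recover the high-probability statement by concentration at the end. Dualizing the $L^q$ norm against $L^{q'}(G)$ and taking the supremum over $a$ rewrites $Q(\xi)$ as the supremum of an empirical process indexed by the $L^{q'}$ ball,
\[
Q(\xi)=\sup_{{\|h\|}_{L^{q'}(G)}\le 1}{\left(\sum_{i=1}^n \xi_i\,{\left|\inner{\psi_i}{h}\right|}^2\right)}^{1/2}.
\]
A first-moment check confirms the scaling: the mean of the inner sum is $\delta\sum_i{|\inner{\psi_i}{h}|}^2$, and the interpolation bound ${\|\sum_i a_i\psi_i\|}_{L^q(G)}\le{\|a\|}_{\ell^2}^{2/q}{\|\sum_i a_i\psi_i\|}_{L^\infty(G)}^{1-2/q}\le n^{1/2-1/q}{\|a\|}_{\ell^2}$ for the full system — which already marries orthogonality with the uniform bound — makes this mean $O(1)$ precisely because $\delta=n^{2/q-1}$. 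Hence the mean is on scale, and the entire difficulty is the uniform control of the \emph{fluctuation} of the empirical sum over the infinite index set.

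The heart of the proof, and the step I expect to be by far the hardest, is this uniform fluctuation bound, which I would attack by symmetrization followed by a Dudley-type chaining estimate; a crude union bound over an $\varepsilon$-net is hopeless, since the net is exponentially large, so genuine chaining is unavoidable. The two hypotheses enter at opposite scales in the entropy integral: the uniform bound ${\|\psi_i\|}_{L^\infty(G)}\le 1$ keeps each functional ${|\inner{\psi_i}{h}|}$ under control and governs the coarse metric entropy of the index family, while orthogonality governs the fine entropy through the $L^2$ geometry. Balancing the dyadic-scale contributions is exactly what produces the critical density exponent $2/q$, and it is Bourgain's refined entropy and chaining analysis (in the spirit of majorizing measures) that makes selection of this optimal order feasible.

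Finally, with $\bE\,Q(\xi)\le C(q)$ in hand, I would invoke a concentration inequality for suprema of empirical processes — a bounded-differences (Azuma--McDiarmid) bound, using that flipping one selector $\xi_i$ moves $Q(\xi)$ by a controlled amount, or a Talagrand-type estimate — to deduce $Q(\xi)\le C(q)$ with probability $1-o_N(1)$. Intersecting this event with the Chernoff event $\{|S|\ge\lceil n^{2/q}\rceil\}$ yields, with probability $1-o_N(1)$, a set $S$ of the claimed size on which the $\Lambda(q)$ inequality holds, establishing all three assertions at once.
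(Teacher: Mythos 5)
The paper itself offers no proof of Theorem \ref{bourgaintheorem}: it is quoted from Bourgain \cite{Bourgain89}, so your attempt can only be measured against Bourgain's argument, whose overall architecture you have reproduced faithfully --- random selectors of density $\delta=n^{\frac{2}{q}-1}$, duality reducing the claim to the empirical process $Q(\xi)=\sup\{(\sum_{i}\xi_i|\inner{\psi_i}{h}|^{2})^{1/2}:\norm{h}_{L^{q'}(G)}\le 1\}$, the first-moment computation showing the mean is $O(1)$ at exactly this density, and a final transfer from the Bernoulli model to sets of size $\lceil n^{2/q}\rceil$. The problem is that everything you actually verify (the interpolation bound $\norm{\sum_i a_i\psi_i}_{L^q(G)}\le n^{\frac{1}{2}-\frac{1}{q}}\norm{a}_{\ell^2}$, the expectation calculation, the Chernoff and trimming bookkeeping) is the routine part, while the one step you defer --- ``symmetrization followed by a Dudley-type chaining estimate'' --- \emph{is} the theorem. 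Plain Dudley chaining does not close: running the entropy integral for the class $h\mapsto(\inner{\psi_i}{h})_{i\le n}$ with the only metrics available (the $\ell^\infty$ control from $\norm{\psi_i}_{L^\infty}\le 1$ and the $\ell^2$ control from orthogonality) loses unbounded factors at the intermediate scales, which is precisely why the $\Lambda(q)$ problem stood open until Bourgain introduced a genuinely new iterated decomposition of the coefficient level sets (recast later by Talagrand via majorizing measures and selector processes). Naming the technique is not supplying it, so the proposal has a genuine gap at its central step.

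Your concentration endgame is also flawed as stated. Flipping a single selector $\xi_i$ can change $Q(\xi)$ by $\sup\{|\inner{\psi_i}{h}|:\norm{h}_{L^{q'}(G)}\le 1\}$, which is bounded only by $\norm{\psi_i}_{L^q}\le 1$; Azuma--McDiarmid over $n$ coordinates then yields deviations of order $\sqrt{n}$, vacuous at the scale $C(q)$, while Markov's inequality applied to $\bE\,Q(\xi)\le C(q)$ gives success probability merely bounded away from $0$, not $1-o_N(1)$. Of the tools you mention in passing, only a Talagrand-type concentration inequality for suprema of empirical processes (applied, say, to $Q^2$, whose individual summands are bounded by $1$ and whose mean is $O(1)$) is adequate, and it would need to be invoked precisely rather than offered as an alternative. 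The Bernoulli-to-uniform transfer and the subset-monotonicity trimming argument are fine as sketched.
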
 

Let $G=\Bbb Z_N^d$. The  dual group is identified by $\Gamma =\Bbb Z_N^d$ in the following manner: For any $m\in\Bbb Z_N^d$, the character $\chi_m: \Bbb Z_N^d\to \Bbb C$ is a (continuous) homomorphism  given by $\chi_m(x)= e^{2\pi i \tfrac{x\cdot m}{N}}$. For the convenience, we write  the set of all characters of ${\mathbb Z}_N^d$  as 
$$\Gamma=\{\chi(x \cdot m) : m \in \mathbb{Z}_N^d\}.$$

Note that  $\Gamma$   is a $\Gamma_q$-set for any $q>2$. By   applying  Bourgain's result to $\Gamma$, treating it as a sequence of orthogonal functions on $G=\Bbb Z_N^d$, we obtain the following result. 

\begin{corollary}\label{bourgaindiscretecorollary} Given $f: {\mathbb Z}_N^d \to {\mathbb C}$, let 
$$ \widehat{f}(m)=N^{-d} \sum_{x \in {\mathbb Z}_N^d} \chi(-x \cdot m) f(x), \quad  x \cdot m=x_1m_1+\dots+x_dm_d,$$ where $\chi(t)=e^{\tfrac{2 \pi i t}{N}}$. Then for a generic subset $\Sigma$ of ${\mathbb Z}_N^d$ of size $\lceil N^{\frac{2d}{q}} \rceil$, $q>2$, if $\widehat{f}$ is supported in $\Sigma$, we have 
\begin{equation}\label{ZNBourgain} {||f||}_{L^q(\mu)} \leq C(q) {||f||}_{L^2(\mu)}, \end{equation} where $C(q)$ depends only on $q$, and here, and throughout, 
\begin{equation} \label{Lpnormalized} {||f||}_{L^p(\mu)}:={\left(\frac{1}{N^d} \sum_{x \in {\mathbb Z}_N^d} {|f(x)|}^p \right)}^{\frac{1}{p}}. \end{equation} 
\end{corollary}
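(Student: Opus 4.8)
The plan is to obtain this as a direct application of Bourgain's theorem (Theorem \ref{bourgaintheorem}) to the character system, with the only genuine work being the bookkeeping of normalizations. First I would equip $G=\bZ_N^d$ with the \emph{normalized} counting measure $\mu$, i.e.\ the measure assigning mass $N^{-d}$ to each point, which is exactly the normalization built into the norm \eqref{Lpnormalized}. With respect to $\mu$ the $n=N^d$ characters $\{\chi_m\}_{m\in\bZ_N^d}$ form an orthonormal system: one has $\inner{\chi_m}{\chi_{m'}}_{L^2(\mu)}=\frac{1}{N^d}\sum_{x}\chi_m(x)\overline{\chi_{m'}(x)}=\delta_{m,m'}$, while $\abs{\chi_m(x)}=1$ gives ${\|\chi_m\|}_{L^\infty(\mu)}=1$. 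Thus $\Psi=\Gamma$ satisfies the hypotheses of Theorem \ref{bourgaintheorem} with $n=N^d$.

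Next I would invoke Theorem \ref{bourgaintheorem} with this $n$. For any fixed $q>2$ it yields, for a generic index set $\Sigma\subset\bZ_N^d$ of size $\lceil n^{2/q}\rceil=\lceil N^{2d/q}\rceil$ (and with probability $1-o_N(1)$ for a random such $\Sigma$), the inequality
\begin{equation}
{\left\|\sum_{m\in\Sigma} a_m\,\chi_m\right\|}_{L^q(\mu)}\le C(q){\left(\sum_{m\in\Sigma}\abs{a_m}^2\right)}^{1/2}
\end{equation}
for all scalar sequences $\{a_m\}$. The remaining step is to feed the Fourier data of $f$ into this estimate. By Fourier inversion one writes $f(x)=\sum_{m\in\bZ_N^d}\widehat{f}(m)\chi_m(x)$, and under the hypothesis that $\widehat{f}$ is supported in $\Sigma$ this collapses to $f=\sum_{m\in\Sigma}\widehat{f}(m)\chi_m$. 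Taking $a_m=\widehat{f}(m)$ therefore gives ${\|f\|}_{L^q(\mu)}\le C(q)\big(\sum_{m\in\Sigma}\abs{\widehat{f}(m)}^2\big)^{1/2}$.

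To finish I would convert the right-hand side into an $L^2(\mu)$ norm using Plancherel's identity. Dividing the identity $\sum_x\abs{f(x)}^2=N^d\sum_m\abs{\widehat{f}(m)}^2$ by $N^d$ yields ${\|f\|}_{L^2(\mu)}^2=\sum_{m}\abs{\widehat{f}(m)}^2=\sum_{m\in\Sigma}\abs{\widehat{f}(m)}^2$, the last equality because $\widehat{f}$ is supported in $\Sigma$. Substituting this into the previous display gives precisely \eqref{ZNBourgain}.

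I do not expect a substantive obstacle here, since the statement is essentially a translation of Bourgain's theorem into the discrete Fourier language; the genericity and the random-$\Sigma$ probability statement transfer verbatim from Theorem \ref{bourgaintheorem}. The one point requiring care—and the place where a sign or factor error would most easily creep in—is the consistent use of the normalized measure $\mu$: one must verify that the same normalization simultaneously makes the $\chi_m$ orthonormal with unit $L^\infty$ bound (so Bourgain applies with the clean constant and the $\ell^2$ right-hand side) and produces the $N^{-d}$ in Plancherel that cancels the $N^d$ from the unnormalized orthogonality relation $\sum_x\chi_m(x)\overline{\chi_{m'}(x)}=N^d\delta_{m,m'}$.
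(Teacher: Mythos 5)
Your proposal is correct and follows exactly the route the paper intends: the corollary is stated there as a direct consequence of Theorem \ref{bourgaintheorem} applied to the character system $\Gamma$ on $G=\bZ_N^d$, and your filling-in of the details---orthonormality and the unit $L^\infty$ bound of the $\chi_m$ under the normalized measure $\mu$, Fourier inversion to take $a_m=\widehat{f}(m)$, and Plancherel to identify $\sum_{m\in\Sigma}|\widehat{f}(m)|^2$ with $\|f\|_{L^2(\mu)}^2$---is precisely the bookkeeping the paper leaves implicit. Your closing remark about the single normalization simultaneously handling orthonormality and Plancherel is the right point of care, and you have handled it correctly.
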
 

\vskip.125in 

The following result provides a connection between Corollary \ref{bourgaindiscretecorollary} and the restriction estimation  results described above. 

\begin{proposition}[\cite{IM24}]\label{frombourgaintorestriction}   Suppose that \eqref{ZNBourgain} holds for some $q>2$ for $\Sigma$ as in the statement of Corollary  \ref{bourgaindiscretecorollary}. Then 
\begin{equation} \label{frombourgaintorestrictionequation} {\left( \frac{1}{|\Sigma|} \sum_{m \in \Sigma} {|\widehat{f}(m)|}^2 \right)}^{\frac{1}{2}} \leq C(q) {\left( \frac{N^{\frac{2d}{q}}}{|\Sigma|} \right)}^{\frac{1}{2}} \cdot N^{-d} \cdot {\left( \sum_{x \in {\mathbb Z}_N^d} {|f(x)|}^{q'} \right)}^{\frac{1}{q'}},\end{equation}  
i.e.,  the $(q',2)$-restriction estimation  holds with the constant bound by 
$$ C(q) \cdot {\left( \frac{N^{\frac{2d}{q}}}{|\Sigma|} \right)}^{\frac{1}{2}}.$$

Since $|\Sigma|=\left\lceil N^{\frac{2d}{q}}\right\rceil$, the $(q',2)$ holds with the constant $\leq C(q)$. 

\end{proposition}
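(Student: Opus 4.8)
The plan is to recognize the Bourgain-type inequality \eqref{ZNBourgain} as an \emph{extension} (synthesis) estimate and then obtain \eqref{frombourgaintorestrictionequation} by duality. First I would introduce the extension operator $E$ sending a function $g:\Sigma\to\mathbb{C}$ to
\begin{equation*}
(Eg)(x)=\sum_{m\in\Sigma}g(m)\,\chi(x\cdot m),\qquad x\in\mathbb{Z}_N^d.
\end{equation*}
A direct computation with the orthogonality relation $N^{-d}\sum_{x}\chi(x\cdot(m'-m))=\delta_{m,m'}$ shows that $\widehat{Eg}$ is supported in $\Sigma$ and equals $g$ there; conversely, by Fourier inversion every $f$ with $\widehat{f}$ supported in $\Sigma$ has the form $f=E(\widehat{f}|_\Sigma)$. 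Using the Plancherel identity in the normalized form $\|f\|_{L^2(\mu)}^2=\sum_{m\in\mathbb{Z}_N^d}|\widehat f(m)|^2$, we get $\|Eg\|_{L^2(\mu)}=\|g\|_{\ell^2(\Sigma)}$, so that \eqref{ZNBourgain} says precisely
\begin{equation*}
\|Eg\|_{L^q(\mu)}\le C(q)\,\|g\|_{\ell^2(\Sigma)}\qquad\text{for all }g:\Sigma\to\mathbb{C},
\end{equation*}
where $\|g\|_{\ell^2(\Sigma)}=\big(\sum_{m\in\Sigma}|g(m)|^2\big)^{1/2}$.

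Next I would compute the adjoint of $E$ with respect to the two relevant pairings: the \emph{normalized} inner product $\langle F,G\rangle_\mu=N^{-d}\sum_x F(x)\overline{G(x)}$ on the physical side, and the plain inner product $\langle a,b\rangle=\sum_{m\in\Sigma}a(m)\overline{b(m)}$ on the frequency side. Pairing $Eg$ against a test function $F$ and using the identity $N^{-d}\sum_x \chi(x\cdot m)\overline{F(x)}=\overline{\widehat F(m)}$ gives
\begin{equation*}
\langle Eg,F\rangle_\mu=\sum_{m\in\Sigma}g(m)\,\overline{\widehat F(m)},
\end{equation*}
so the adjoint is simply restriction of the Fourier transform to $\Sigma$, namely $E^*F=\widehat F|_\Sigma$.

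Then, since the operator norm of $E:\ell^2(\Sigma)\to L^q(\mu)$ equals that of its adjoint $E^*:L^{q'}(\mu)\to\ell^2(\Sigma)$, the extension bound dualizes to
\begin{equation*}
\Big(\sum_{m\in\Sigma}|\widehat F(m)|^2\Big)^{1/2}=\|E^*F\|_{\ell^2(\Sigma)}\le C(q)\,\|F\|_{L^{q'}(\mu)}
\end{equation*}
for \emph{every} $F:\mathbb{Z}_N^d\to\mathbb{C}$, which is exactly the form the restriction estimate must take. Finally I would put this into the shape of \eqref{frombourgaintorestrictionequation}: dividing by $|\Sigma|^{1/2}$ and writing the normalized norm as $\|F\|_{L^{q'}(\mu)}=N^{-d/q'}\big(\sum_x|F(x)|^{q'}\big)^{1/q'}=N^{d/q}\,N^{-d}\big(\sum_x|F(x)|^{q'}\big)^{1/q'}$ (using $1-1/q'=1/q$) produces exactly the constant $C(q)\,(N^{2d/q}/|\Sigma|)^{1/2}$. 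The final assertion then follows since $|\Sigma|=\lceil N^{2d/q}\rceil\ge N^{2d/q}$ forces this factor to be at most $C(q)$.

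I expect the only genuine obstacle to be the careful bookkeeping of the powers of $N$: the physical side carries the normalized measure $\mu$ while the frequency side carries plain counting measure, and the right-hand side of \eqref{frombourgaintorestrictionequation} is written with an \emph{unnormalized} $\ell^{q'}$ sum together with an explicit factor $N^{-d}$. Keeping these normalizations consistent when computing $E^*$ and when passing from $\|F\|_{L^{q'}(\mu)}$ back to $\big(\sum_x|F(x)|^{q'}\big)^{1/q'}$ is precisely where the factor $N^{d/q}=(N^{2d/q})^{1/2}$ is generated, and it is the one place where a slip would alter the stated constant.
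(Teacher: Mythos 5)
Your proof is correct: the identification of \eqref{ZNBourgain} as the extension bound $\|Eg\|_{L^q(\mu)}\le C(q)\|g\|_{\ell^2(\Sigma)}$, the computation $E^*F=\widehat F|_\Sigma$ with respect to the stated pairings, and the conversion $\|F\|_{L^{q'}(\mu)}=N^{d/q}\,N^{-d}\bigl(\sum_x|F(x)|^{q'}\bigr)^{1/q'}$ all check out and yield exactly the constant $C(q)\bigl(N^{2d/q}/|\Sigma|\bigr)^{1/2}$ in \eqref{frombourgaintorestrictionequation}, with the final claim following from $|\Sigma|=\lceil N^{2d/q}\rceil\ge N^{2d/q}$. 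The paper itself states this proposition without proof, deferring to \cite{IM24}, and the argument there is precisely this extension--restriction duality (often run concretely by pairing $F$ against $E(\widehat F|_\Sigma)$ and applying H\"older, which is the same computation your abstract adjoint-norm step encapsulates), so your route is essentially the standard one.
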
 

\vskip.125in 

\begin{definition}\label{def:concentration} We say that $f: {\mathbb Z}_N^d \to {\mathbb C}$ is $L^p$-concentrated on $E \subset {\mathbb Z}_N^d$ at level $\lambda$ if 
\begin{equation} \label{concentrationeq} {\left( \sum_{x \in {\mathbb Z}_N^d} {|f(x)|}^p \right)}^{\frac{1}{p}} \leq \lambda {\left( \sum_{x \in E} {|f(x)|}^p \right)}^{\frac{1}{p}}.\end{equation}
\end{definition} 
It is clear that $\lambda\geq 1$.

\vskip.125in 

\begin{remark} It is not difficult to see that (\ref{concentrationeq}) is equivalent to 
\begin{equation} {\left( \sum_{x \in {\mathbb Z}_N^d} {|f(x)-1_E(x)f(x)|}^p \right)}^{\frac{1}{p}} \leq 
\epsilon {\left( \sum_{x \in {\mathbb Z}_N^d} {|f(x)|}^p \right)}^{\frac{1}{p}}, \end{equation} where 
$$ \epsilon={\left( 1-\frac{1}{\lambda^p} \right)}^{\frac{1}{p}}.$$
\end{remark} 

\vskip.125in 

% \begin{theorem}[Uncertainty Principle in the presence of Randomness  \cite{IM24}]\label{mainbourgain}   Let $\Sigma \subset {\mathbb Z}_N^d$ be a subset of size $\left\lceil N^{\frac{2d}{q}}\right\rceil$, with $q>2$, chosen randomly with uniform probability. Suppose that $f: {\mathbb Z}_N^d \to {\mathbb C}$ is $L^2$-concentrated in $E \subset {\mathbb Z}_N^d$ at level $\lambda \ge 1$. Suppose $\widehat{f}$ is supported on $\Sigma$. {\color{red} Assume that the estimation \eqref{}  holds for $\Sigma$ for $(q,2)$ with constant $C(q)$.}
% Then 

% $${\mathbb P} \left( |E| \leq \frac{N^d}{{(\lambda C(q))}^{\frac{1}{\frac{1}{2} - \frac{1}{q}}}} \right) =o_N(1).$$ 

% % Then with probability $1-o_N(1)$, 
% % $$ |E| \ge \frac{N^d}{{(\lambda C(q))}^{\frac{1}{\frac{1}{2}-\frac{1}{q}}}},$$ 
% where $C(q)$ depends only on $q$. 
% \end{theorem} 

% \vskip.125in 

% \begin{corollary} [\cite{IM24}] \label{bourgainsignalrecovery} Let $f: {\mathbb Z}_N^d \to {\mathbb C}$ be a signal supported on a set $E$ such that $|E|=o(N^d)$. Suppose that $\widehat{f}$ is transmitted and the frequencies $\hat f(m)$ for $m$ in  a set $S$ of size $\left\lceil N^{\frac{2d}{q}}\right\rceil$, $q>2$, satisfying \eqref{ZNBourgain} are unobserved. Then $f$ can be recovered uniquely. \end{corollary}

% {\color{red} My general suggestion for this part is not to state results (theorems, corollaries, etc.) unless you refer to them elsewhere in the paper. See the theorems above.} 

    The above uncertainty principles will typically hold at the cost of a constant if $f:\bZ_N^d\to\bC$ is only $L^p$-concentrated on $E$ at level $\lambda$ rather than properly supported in $E$. However, if $f$ is concentrated in $E$ and $g$ is concentrated in $F$, then $f-g$ may not be concentrated in any set of size at most $|F| + |G|$. Thus, the same signal recovery techniques may not be possible.

To address this limitation and take advantage of concentration rather than strict support,   we restrict our attention to a particular class of functions generalizing indicator functions. This  approach allows us to apply the above techniques for signal recovery when considering concentration rather than support.
 
\section{Main Results}

% \subsection{Dirac Combs of Fixed Complexity}

 To formulate our main result precisely, we begin with the following definition:
 
\begin{definition}[Dirac combs of complexity $\gamma$]\label{diraccombdef}
    Let $\gamma$ be a nonnegative integer, and let $\delta,M > 0$. We say $f:\bZ_N^d\to \bC$ is a Dirac comb of complexity $\gamma$ with parameters $\delta$, $M$ if it is of the form
    $$f = \sum_{i=1}^\gamma a_i 1_{A_i},$$
    where $A_1,\dots,A_\gamma$ are disjoint subsets of $\bZ_N^d$ and each $a_i$ is chosen from a collection $\{\alpha_i\}_{i\in I}$ which satisfies the following properties:
    \begin{itemize}
        \item $\alpha_i\in\bC$ for each $i$,
        \item $0\in \{\alpha_i\}_{i\in I}$,
        \item $|\alpha_i - \alpha_j| \geq \delta>0$ for each $i\neq j$,
        \item $|\alpha_i| \leq M$ for each $i$.
    \end{itemize}    
    
We chose $\gamma$ to be the smallest number so that these properties are satisfied. Furthermore, denote by $\cD_{\delta,M,\gamma}$ the set of all Dirac combs of complexity at most $\gamma$ for a given $\delta$ and $M$. See Figure \ref{DiracCombPicture} below. 
\end{definition}

\begin{remark} \label{remark:lattice} Any indicator function is a Dirac comb of complexity $\gamma=1$ with $\delta=M=1$. Moreover, note that any $f: {\mathbb Z}_N^d \to   \delta {\mathbb Z} \cap [-M,M]$, where $\delta {\mathbb Z}=\{\delta k: k \in {\mathbb Z}\}$,   is a Dirac comb of complexity $\gamma \leq 2M\delta^{-1}+1$ with parameters $\delta, M$.  

Moreover, any function $f:\bZ_N^d\to\bC$ is a Dirac comb of complexity at most $\gamma = N^d$ with parameters $\delta = \min_{x\neq y} |f(x)-f(y)|$ and $M = \max_x |f(x)|$. However, we will see this may not be a useful assumption since $\gamma$ may be very large and $\frac{\delta}{M}$ may be very small.
\end{remark}

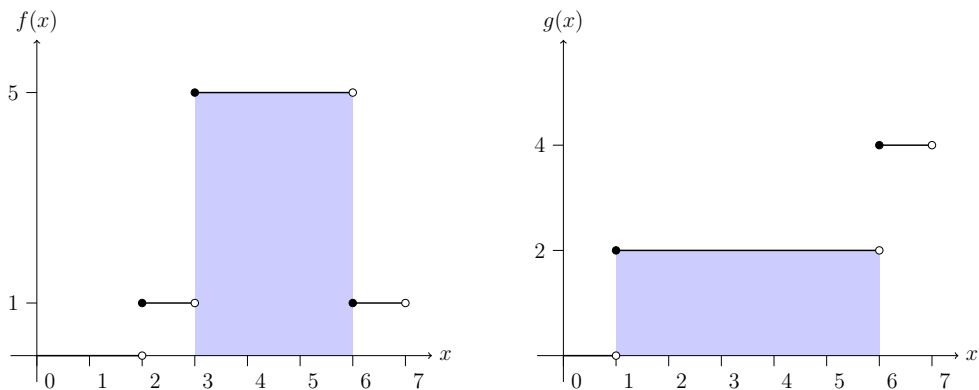
\begin{figure}[h!]
    \centering
    \scalebox{0.7}{
    \begin{tikzpicture}
        % First Graph (f(x))
        % Segment [3,6) with f(x) = 5 (Shaded area)
        \fill[blue!20] (3,0) rectangle (6,5);
        \draw[thick] (3,5) -- (6,5);
        \filldraw (3,5) circle (2pt);
        \filldraw[fill=white] (6,5) circle (2pt);
        
        % Axis
        \draw[->] (-0.5, 0) -- (7.5, 0) node[right] {$x$};
        \draw[->] (0, -0.5) -- (0, 6) node[above] {$f(x)$};
    
        % Labels for values on x-axis
        \foreach \x in {0, 1, 2, 3, 4, 5, 6, 7} {
            \draw (\x, 0) -- (\x, -0.2) node[below right] {\x};
        }
    
        % Labels for values on y-axis
        \foreach \y in {1, 5} {
            \draw (0, \y) -- (-0.2, \y) node[left] {\y};
        }
    
        % Step function segments and shading
        % Segment [0,2) with f(x) = 0
        \draw[thick] (0,0) -- (2,0);
        \filldraw[fill=white] (2,0) circle (2pt);
        
        % Segment [2,3) with f(x) = 1
        \draw[thick] (2,1) -- (3,1);
        \filldraw (2,1) circle (2pt);
        \filldraw[fill=white] (3,1) circle (2pt);
    
        % Segment [6,7) with f(x) = 1
        \draw[thick] (6,1) -- (7,1);
        \filldraw (6,1) circle (2pt);
        \filldraw[fill=white] (7,1) circle (2pt);
    
        % Second Graph (g(x))
        % Offset the second graph by moving everything 10 units to the right
    \begin{scope}[xshift=10cm]
        % Segment [1,6) with g(x) = 2 (Shaded area)
        \fill[blue!20] (1,0) rectangle (6,2);
        \draw[thick] (1,2) -- (6,2);
        \filldraw (1,2) circle (2pt);
        \filldraw[fill=white] (6,2) circle (2pt);

        % Axis for g(x)
        \draw[->] (-0.5, 0) -- (7.5, 0) node[right] {$x$};
        \draw[->] (0, -0.5) -- (0, 6) node[above] {$g(x)$};
    
        % Labels for values on x-axis for g(x)
        \foreach \x in {0, 1, 2, 3, 4, 5, 6, 7} {
            \draw (\x, 0) -- (\x, -0.2) node[below right] {\x};
        }

        % Labels for values on y-axis for g(x)
        \foreach \y in {2, 4} {
            \draw (0, \y) -- (-0.2, \y) node[left] {\y};
        }

        % Step function segments and shading for g(x)
        % Segment [0,1) with g(x) = 0
        \draw[thick] (0,0) -- (1,0);
        \filldraw[fill=white] (1,0) circle (2pt);

        % Segment [6,7) with g(x) = 4
        \draw[thick] (6,4) -- (7,4);
        \filldraw (6,4) circle (2pt);
        \filldraw[fill=white] (7,4) circle (2pt);
    \end{scope}
    \end{tikzpicture}
    }
    \caption{  Dirac combs with $2$-effective support in the shaded regions.
     One Dirac comb $f\in\mathcal{D}_{1,5,2}$ is illustrated on the left, while another $g\in\mathcal{D}_{2,4,2}$} is illustrated on the right.
    \label{DiracCombPicture}
\end{figure}

%{\color{red} Are we referring to this graph somewhere in the paper? {\color{blue} We wanted to illustrate what a Dirac Comb is, we didn't refer to it in the paper} You should ;)}   

Our first result is the following lemma, which illustrates that any  Dirac comb $f:\bZ_N^d\to\bC$ of complexity $\gamma$ with parameters $\delta$ and  $M$  is $L^p$-concentrated on some set. The proof is constructive and deterministic.
   
\begin{lemma}\label{effectivesupportreordering} 
Let  $f: \bZ_N^d\to\bC$ be a Dirac comb of complexity $\gamma$ with parameters $\delta$, $M$. Then for $0<p<\infty$, there is a set $A$ for which
$f$ is $L^p$-concentrated on $A$ at level $\gamma^{1/p}$, independent of $\delta$ and $M$. 

% change grammar here and mention we dont need \gamma or M
\end{lemma}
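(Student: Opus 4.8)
The plan is to prove the lemma by a direct pigeonhole argument on the $\gamma$ level sets of $f$, with no randomness required; the proof will be constructive and deterministic, exactly as advertised. Since $f = \sum_{i=1}^{\gamma} a_i 1_{A_i}$ with the $A_i$ disjoint, $f$ takes the constant value $a_i$ on $A_i$ and vanishes off $\bigcup_i A_i$. Hence the $L^p$ mass decomposes exactly as
$$ \sum_{x \in \bZ_N^d} |f(x)|^p = \sum_{i=1}^{\gamma} |a_i|^p\, |A_i|, $$
a sum of precisely $\gamma$ nonnegative terms.

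First I would apply the pigeonhole principle to this finite sum: there must exist an index $i_0 \in \{1,\dots,\gamma\}$ whose block carries at least a $1/\gamma$ fraction of the total mass, i.e.
$$ |a_{i_0}|^p\, |A_{i_0}| \ge \frac{1}{\gamma} \sum_{i=1}^{\gamma} |a_i|^p\, |A_i| = \frac{1}{\gamma} \sum_{x \in \bZ_N^d} |f(x)|^p. $$
If $f \equiv 0$ the conclusion is trivial, so we may assume the total mass is positive, in which case such an $i_0$ exists with $a_{i_0} \neq 0$.

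Next I would set $A := A_{i_0}$. Because $f$ equals $a_{i_0}$ at every point of $A_{i_0}$, we have $\sum_{x \in A} |f(x)|^p = |a_{i_0}|^p |A_{i_0}|$. Combining this with the previous display gives
$$ \sum_{x \in \bZ_N^d} |f(x)|^p \le \gamma \sum_{x \in A} |f(x)|^p, $$
and taking $p$-th roots (a monotone operation for all $0<p<\infty$) yields exactly the concentration inequality \eqref{concentrationeq} with level $\lambda = \gamma^{1/p}$. Note that this bound depends only on $\gamma$ and $p$, and not on $\delta$ or $M$, as claimed, while the chosen set $A$ is explicit.

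I do not anticipate a genuine obstacle here; the only points requiring care are bookkeeping. One should confirm that the mass decomposition is exact, which uses the disjointness of the $A_i$ and the vanishing of $f$ outside their union (blocks with $a_i = 0$, which exist since $0 \in \{\alpha_i\}$, simply contribute nothing and are harmless to include or discard). One should also dispatch the degenerate case $f \equiv 0$ separately, where any choice of $A$ works. The remaining hypotheses $|\alpha_i - \alpha_j| \ge \delta$ and $|\alpha_i| \le M$ play no role in this particular estimate; they become relevant only later, when passing from concentration back to an uncertainty principle or a recovery statement.
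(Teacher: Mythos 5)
Your proof is correct and is essentially the paper's own argument: both decompose the $L^p$ mass as $\sum_{i=1}^\gamma |a_i|^p|A_i|$ and pigeonhole to select a block carrying at least a $1/\gamma$ fraction of it. The only difference is cosmetic: the paper selects the maximal term via an explicit reordering with tie-breaking rules, which it needs later to make the $p$-effective support in Definition \ref{effectivemassandsupportdef} well defined, whereas your choice of $i_0$ need not be unique.
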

\begin{proof}
    Let $p>0$. Then we have 
    $$\sum_{x\in \bZ_N^d}\abs{f(x)}^p = \sum_{i=1}^\gamma |a_i|^p |A_i|.$$
    We can then reorder the quantities $|a_i|^p|A_i|$ as follows:
    \begin{itemize}
        \item $|a_1|^p|A_1| \geq |a_2|^p|A_2|\geq \cdots$,
        \item If $|a_i|^p|A_i| = |a_j|^p|A_j|$, then place $i$ before $j$ if $|A_i| \leq |A_j|$,
        \item If $|a_i|^p|A_i| = |a_j|^p|A_j|$ and $|A_i| = |A_j|$, then place $i$ before $j$ if $arg(a_i) \leq arg(a_j)$.
    \end{itemize}
    Upon this reordering, there is a unique choice of $a_1$ and $A_1$ such that $|a_1|^p|A_1| \geq |a_i|^p|A_i|$ for each $i$. As such, we have that
    $$\sum_{x\in\bZ_N^d}\abs{f(x)}^p \leq \gamma |a_1|^p|A_1| = \gamma\cdot\sum_{x\in A_1}\abs{f(x)}^p,$$
    and thus $f$ is $L^p$-concentrated on $A_1$ at level $\lambda=\gamma^{\frac{1}{p}}$ 
    (see Definition \ref{def:concentration}). This completes the proof. 
\end{proof}

\begin{remark}
    Note that the reason we exclude the case $p=\infty$ is that we are in a finite setting, and as such, any function $f:\bZ_N^d\to\bC$ is $L^\infty$ concentrated on every subset of $\bZ_N^d$, thus we lose the ability to leverage $\gamma$ for our pigeonholing argument.
\end{remark}

\begin{definition}[$p$-effective support, weight, and  mass]\label{effectivemassandsupportdef}
    Let $f:\bZ_N^d\to\bC$ be a Dirac comb of complexity $\gamma$ with parameters $\delta$ and $M$. Upon reordering $|a_1|^p|A_1|,|a_2|^p|A_2|,\dots$ as in Lemma \ref{effectivesupportreordering}, we call $a_1$, 
    $A_1$,  and the quantity $|a_1|^p|A_1|$, 
    the $p$-effective weight, $p$-effective support, and 
    $p$-effective mass of $f$, respectively. 
\end{definition}

\begin{remark}
    Note that the $p$-effective support of $f$  can vary depending on the value of $p$. As an example, for $N>5$ let 
    $$f = \frac{N-1}{4}\cdot 1_{\{0\}} + \frac{1}{2}\cdot 1_{\{1,\dots,N-1\}}.$$
    Then the $1$-effective support of $f$ is $\{1,\dots,N-1\}$ while the $2$-effective support of $f$ is $\{0\}$.

\end{remark}
\begin{comment}
{\color{cyan} Would it be better to put 3.3 and 3.4 before Lemma 3.2? Again, I just worry about readability, I'm having my girlfriend read this with me to look for places where the structure of the paper could be improved, obv she doesnt understand the mathematical side of it. -Kelvin }
\red{I don't think so because to me it motivates the definition. Like the ordering in the definition comes from the fact that in the lemma we are finding something to pigeonhole with, and a (semi-)natural ordering arises. We are giving a name in 3.3 to the object we are using to pigeonhole in the lemma. --Gio}
{\color{cyan}Oh wait that makes more sense. that is 100\% logical. }
\end{comment}
Our next result establishes 
an uncertainty principle for Dirac comb functions in terms of their $2$-effective support and their complexity.

\begin{theorem}[Uncertainty Principle for Dirac Combs]\label{stepfunctionUP}
    Let $f:\bZ_N^d\to\bC$ be a nonzero Dirac comb of complexity $\gamma$ with parameters $\delta$ and $M$. If $\widehat{f}$ is supported in $\Sigma$ and $A_1$ is the $2$-effective support of $f$, then independent of $\delta$ and $M$ we have that
    \begin{equation}\label{stepfunctionUPequation}
        |A_1|\cdot |\Sigma| \geq \frac{N^d}{\gamma}.
    \end{equation}
\end{theorem}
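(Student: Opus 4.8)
The plan is to run the classical Donoho--Stark argument, but to replace the hypothesis that $f$ is supported on a small set by the $L^2$-concentration on the $2$-effective support $A_1$ supplied by Lemma \ref{effectivesupportreordering}. The chain of inequalities pivots on the pointwise reconstruction bound $\max_x |f(x)| \le \sum_m |\widehat{f}(m)|$ on one side, and on a lower bound for $\max_x |f(x)|$ by the $2$-effective weight $|a_1|$ on the other.

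First I would use Fourier inversion $f(x) = \sum_m \chi(x\cdot m)\,\widehat{f}(m)$ together with the triangle inequality to obtain $\max_{x} |f(x)| \le \sum_{m\in\Sigma} |\widehat{f}(m)|$, where the sum is restricted to $\Sigma$ because $\widehat{f}$ is supported there. Cauchy--Schwarz then gives $\sum_{m\in\Sigma} |\widehat{f}(m)| \le |\Sigma|^{1/2}\bigl(\sum_{m} |\widehat{f}(m)|^2\bigr)^{1/2}$, and Plancherel's identity rewrites the right-hand $\ell^2$-norm of $\widehat{f}$ as $N^{-d/2}\bigl(\sum_{x}|f(x)|^2\bigr)^{1/2}$. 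Chaining these yields
$$\max_x |f(x)| \;\le\; |\Sigma|^{1/2}\, N^{-d/2}\Bigl(\sum_{x\in\bZ_N^d} |f(x)|^2\Bigr)^{1/2}.$$

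Next I would invoke Lemma \ref{effectivesupportreordering} with $p=2$, which produces exactly the bound $\sum_{x} |f(x)|^2 \le \gamma\, |a_1|^2\, |A_1|$, i.e.\ the $2$-effective mass of Definition \ref{effectivemassandsupportdef} controls the full $L^2$-mass up to the factor $\gamma$. Substituting gives $\max_x |f(x)| \le |\Sigma|^{1/2} N^{-d/2} \gamma^{1/2} |a_1|\,|A_1|^{1/2}$. The final ingredient is the lower bound $\max_x |f(x)| \ge |a_1|$: since $f$ is nonzero, its $2$-effective mass $|a_1|^2|A_1|$ is strictly positive, so $A_1 \neq \emptyset$ and $a_1 \neq 0$, and $f$ takes the value $a_1$ on $A_1$. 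Dividing both sides by $|a_1|>0$ and squaring gives $1 \le |\Sigma|\,N^{-d}\gamma\,|A_1|$, which rearranges to \eqref{stepfunctionUPequation}.

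I expect no serious obstacle; the content is essentially bookkeeping, with the classical uncertainty argument running unchanged once strict support is swapped for concentration. The one point requiring care — and the real crux — is the decision to bound $\max_x |f(x)|$ from below by the effective weight $|a_1|$ rather than by $\max_i |a_i|$. The effective support $A_1$ need not carry the largest amplitude, but it does carry a nonzero one, and that is all the argument needs: it is precisely this $|a_1|$ on the left that cancels against the $|a_1|$ emerging from the concentration estimate $\sum_x |f(x)|^2 \le \gamma |a_1|^2 |A_1|$, leaving the clean factor of $\gamma$ in the final bound.
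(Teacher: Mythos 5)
Your proof is correct and takes essentially the same approach as the paper's: Fourier inversion restricted to $\Sigma$, Cauchy--Schwarz, Plancherel, and the concentration bound from Lemma \ref{effectivesupportreordering} with $p=2$. The only difference is cosmetic bookkeeping at the end --- the paper sums the uniform pointwise bound over $x\in A_1$ and divides by $\sum_{x\in A_1}|f(x)|^2>0$, while you evaluate at a point of $A_1$ via $\max_x|f(x)|\ge|a_1|>0$ and cancel $|a_1|$; the two steps are equivalent since $f\equiv a_1$ on $A_1$, and your observation that $f\neq 0$ forces $|a_1|^2|A_1|>0$ correctly justifies the division.
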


\begin{remark}  
Note that the classical Fourier uncertainty principle can be derived from Theorem \ref{stepfunctionUP} when $f$ is an indicator function, since in this case we have $\gamma=1$
% explain more
\end{remark}

The result of Theorem \ref{stepfunctionUP} can be stated for any $p$-effective support, subject to certain restrictions on the support of the Fourier transform. More precisely, if we have a $(p,q)$-restriction estimation for  the support of $\hat f$, $\Sigma$, we can further deduce the following.

\begin{theorem}[Uncertainty Principle for Dirac Combs in the Presence of Restriction Estimation]\label{stepfunctionUPwithrestriction}
    Let $f:\bZ_N^d\to\bC$ be a nonzero Dirac comb of complexity $\gamma$ with parameters $\delta$ and $M$. Suppose $\widehat{f}$ is supported in $\Sigma$ and $A_1$ is the $p$-effective support of $f$. If a $(p,q)$-restriction holds for $\Sigma$, then, independent of $\delta$ and $M$,  we have that
    \begin{equation}\label{ineq:stepfunctionUPwithrestrictionequation}
        |A_1|^\frac{1}{p} \cdot |\Sigma|\geq \frac{N^d}{C_{p,q}\gamma^\frac{1}{p}}.
    \end{equation}
\end{theorem}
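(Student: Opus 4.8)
The plan is to run the argument behind Theorem~\ref{mainUPwithRT} almost verbatim, replacing the support hypothesis ``$f$ is supported in $E$'' by the $L^p$-concentration guaranteed by Lemma~\ref{effectivesupportreordering}. Concretely, I would produce two inequalities for the sup-norm $\max_{x}|f(x)|$ — an upper bound coming from Fourier inversion together with the restriction estimate, and a lower bound coming from the concentration of $f$ on its $p$-effective support $A_1$ — and then combine them and cancel a common factor.

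For the upper bound, since $\widehat f$ is supported in $\Sigma$, Fourier inversion gives $f(x)=\sum_{m\in\Sigma}\widehat f(m)\chi(x\cdot m)$, whence $|f(x)|\le\sum_{m\in\Sigma}|\widehat f(m)|$ for every $x$. Applying Hölder's inequality on $\Sigma$ with exponents $q$ and $q'$ (for $q=\infty$ one uses the sup-norm convention directly) and collecting the powers of $|\Sigma|$ via $\tfrac1q+\tfrac1{q'}=1$ yields
\[
\max_{x}|f(x)|\ \le\ |\Sigma|\left(\frac{1}{|\Sigma|}\sum_{m\in\Sigma}|\widehat f(m)|^{q}\right)^{1/q}.
\]
The $(p,q)$-restriction estimate~\eqref{restrictionequation} bounds the right-hand factor by $C_{p,q}N^{-d}\bigl(\sum_x|f(x)|^p\bigr)^{1/p}$, so
\[
\max_{x}|f(x)|\ \le\ C_{p,q}\,N^{-d}\,|\Sigma|\left(\sum_{x\in\bZ_N^d}|f(x)|^{p}\right)^{1/p}.
\]

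For the lower bound, Lemma~\ref{effectivesupportreordering} gives $\sum_x|f(x)|^p\le\gamma\sum_{x\in A_1}|f(x)|^p$. Here I would invoke the Dirac-comb structure: $f$ is \emph{constant}, equal to $a_1$, on its $p$-effective support $A_1$, so $\sum_{x\in A_1}|f(x)|^p=|a_1|^p|A_1|$; moreover $f\ne0$ forces $|a_1|^p|A_1|>0$, hence $A_1\ne\emptyset$ and $|a_1|\le\max_x|f(x)|$. Combining these,
\[
\left(\sum_{x\in\bZ_N^d}|f(x)|^{p}\right)^{1/p}\ \le\ \gamma^{1/p}\,|A_1|^{1/p}\,\max_{x}|f(x)|.
\]
Chaining this with the preceding upper bound and cancelling the common (positive, since $f\ne0$) factor $\bigl(\sum_x|f(x)|^p\bigr)^{1/p}$ leaves $1\le C_{p,q}\gamma^{1/p}|A_1|^{1/p}N^{-d}|\Sigma|$, which is exactly~\eqref{ineq:stepfunctionUPwithrestrictionequation} after rearranging.

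The steps are routine once arranged in this order; the only point needing care — and the place where ``effective support'' earns its name — is the passage $\bigl(\sum_{x\in A_1}|f(x)|^p\bigr)^{1/p}=|a_1|\,|A_1|^{1/p}\le\max_x|f(x)|\,|A_1|^{1/p}$, valid precisely because $A_1$ is a single level set of the comb on which $f$ is constant. This is what allows $|A_1|$ to take over the role played by the support $|E|$ in Theorem~\ref{mainUPwithRT}, with the cost of the substitution quantified by the complexity factor $\gamma^{1/p}$.
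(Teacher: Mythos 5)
Your proposal is correct and follows essentially the paper's own route: the identical chain (Fourier inversion, H\"older on $\Sigma$, the $(p,q)$-restriction estimate, then the $\gamma$-concentration from Lemma \ref{effectivesupportreordering}) produces a pointwise bound on $|f(x)|$, and where the paper raises it to the $p$th power and sums over $A_1$, you equivalently bound $\sum_{x\in A_1}|f(x)|^p \le |A_1|\max_x|f(x)|^p$ before cancelling the common positive factor. One small correction to your closing remark: the constancy of $f$ on $A_1$ is not actually needed for that step, since $\sum_{x\in A}|f(x)|^p\le |A|\,\max_x|f(x)|^p$ holds for \emph{every} set $A$; the Dirac-comb structure enters only through Lemma \ref{effectivesupportreordering}.
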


Since a $(1,q)$-restriction estimate holds for every set $\Sigma$, Theorem \ref{stepfunctionUPwithrestriction} allows us to deduce that Theorem \ref{stepfunctionUP} holds for $1$-effective support in addition to $2$-effective support. To this effect, we have the following result. 

\begin{corollary}\label{retrival} 
    If $f:\bZ_N^d\to\bC$ is a nonzero Dirac comb of complexity $\gamma$ with parameters $\delta$ and $M$, and if $\widehat{f}$ is supported in $\Sigma$ and $A_1$ is the $1$-effective support of $f$, then independent of $\delta$ and $M$ we have that
    $$|A_1|\cdot|\Sigma| \geq \frac{N^d}{\gamma}.$$
\end{corollary}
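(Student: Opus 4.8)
The plan is to obtain this statement as the special case $p=1$ of Theorem \ref{stepfunctionUPwithrestriction}. The only thing requiring verification is that a $(1,q)$-restriction estimate holds for \emph{every} subset $\Sigma \subset \bZ_N^d$ with constant $C_{1,q}=1$; once this is established, the desired inequality follows by direct substitution, with no further work.

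To verify the $(1,q)$-restriction estimate, I would begin from the elementary pointwise bound on the Fourier transform. For any $m \in \bZ_N^d$, the triangle inequality together with $|\chi(-x\cdot m)|=1$ gives
$$|\widehat{f}(m)| = \left| N^{-d} \sum_{x \in \bZ_N^d} \chi(-x\cdot m) f(x) \right| \leq N^{-d} \sum_{x \in \bZ_N^d} |f(x)|.$$
The right-hand side is independent of $m$, so it in particular dominates $\max_{m \in \Sigma} |\widehat{f}(m)|$. Hence, for any $1 \le q < \infty$ and any $\Sigma$,
$$\left( \frac{1}{|\Sigma|} \sum_{m \in \Sigma} |\widehat{f}(m)|^q \right)^{1/q} \leq \max_{m \in \Sigma} |\widehat{f}(m)| \leq N^{-d} \sum_{x \in \bZ_N^d} |f(x)|,$$
which is exactly the restriction inequality \eqref{restrictionequation} with $p=1$ and $C_{1,q}=1$ (the case $q=\infty$ being identical once the left-hand norm is read as a supremum).

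With this estimate in hand, I would apply Theorem \ref{stepfunctionUPwithrestriction} with $p=1$. Taking $A_1$ to be the $1$-effective support of $f$ and inserting $C_{1,q}=1$, inequality \eqref{ineq:stepfunctionUPwithrestrictionequation} becomes
$$|A_1| \cdot |\Sigma| \geq \frac{N^d}{C_{1,q}\,\gamma} = \frac{N^d}{\gamma},$$
which is precisely the claimed bound.

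There is essentially no obstacle here: the mathematical content lives entirely in Theorem \ref{stepfunctionUPwithrestriction}, and this corollary merely records the observation that the case $p=1$ needs no nontrivial restriction hypothesis, since the trivial bound already furnishes a $(1,q)$-estimate. The one point I would state with care is that the constant is exactly $1$ rather than merely bounded, so that $C_{1,q}$ drops out of the denominator and we recover the clean form $N^d/\gamma$, mirroring Theorem \ref{stepfunctionUP} for the $2$-effective support.
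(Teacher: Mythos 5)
Your proposal is correct and matches the paper's proof essentially verbatim: both establish the trivial $(1,q)$-restriction estimate with $C_{1,q}=1$ from the triangle-inequality bound $|\widehat{f}(m)| \leq N^{-d}\sum_{x}|f(x)|$, then invoke Theorem \ref{stepfunctionUPwithrestriction} with $p=1$. Your routing through $\max_{m\in\Sigma}|\widehat{f}(m)|$ rather than raising to the $q$th power and summing, as the paper does, is a cosmetic difference only.
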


%{\color{red} This corollary and the first theorem show that the inequality holds for $p$-effective support when $p=1$ or $p=2$. I suggest that you reorder the theorems and present the Uncertainty Principle for Dirac combs as a corollary of the preceding theorem. The proofs are essentially the same. }

\vskip.1in 

%{\color{red} One might be interested in  special cases, such as  $p=\infty$!}

%{\color{blue} I see what you mean, but Theorem 3.6 does not appear to follow from Theorem 3.8. The proofs are similar, but not the same. The $p=\infty$ case is indeed interesting but does not appear to fall within the scope of our arguments. The proofs do not seem to extend to this case.} 

%{\color{red} In this case, my recommendation is to eliminate the assumption that $p\leq \infty$ and instead reuqire   $p<\infty$, if applicable.. the case infinity could be posed as an open problem.} 
%{\color{blue} Sounds great.}

\begin{remark}
  It is worth noting that the results of 
     Lemma \ref{effectivesupportreordering}, Theorem \ref{stepfunctionUP} and \ref{stepfunctionUPwithrestriction}, and Corollary \ref{retrival} for a Dirac comb function are independent of the parameters $\delta$, $M$, and $a_1$.
\end{remark}

\subsection{Signal Recovery with Dirac Combs}

Using the above uncertainty principles, we give the following exact recovery conditions for Dirac combs with known effective mass. 

 % outline recovery procedure in remark

\begin{theorem}[Exact Recovery of Dirac Combs]\label{stepfunctionrecovery}
    Let $f:\bZ_N^d\to\bC$ be a Dirac comb of complexity $\gamma$ with parameters $\delta$ and $M$, and suppose $f$ has 
    $p$-effective support $A_1$, $p<\infty$. 
    Suppose that the $p$-effective mass of $f$ is known and  the frequencies $\{\widehat{f}(m)\}_{m\in S}$ are lost for some $S\subsetneq \bZ_N^d$. Then $f$ can be recovered  uniquely if one of the following properties holds true: 
    
    \begin{enumerate}
    \item\label{1}   When $p=2$  and 
      \begin{equation}\label{stepfunctionrecoverycondition}
        |A_1|\cdot |S| < \frac{N^d}{4(\gamma^2+2\gamma)} \left( \frac{\delta}{M} \right)^2.
    \end{equation}
    \item\label{2}
   When a $(p,q)$-restriction estimate holds for some $1 \leq p \leq q$, for the set $S$ and 
    \begin{equation}\label{stepfunctionrecoveryconditionwithrestriction}
        |A_1|^\frac{1}{p}\cdot|S| < \frac{N^d}{2C_{p,q}(\gamma^2+2\gamma)^\frac{1}{p}} \left(\frac{\delta}{M} \right).
    \end{equation}
    \end{enumerate}
\end{theorem}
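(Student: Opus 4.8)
The plan is to argue by contradiction through the uncertainty principles of Theorems \ref{stepfunctionUP} and \ref{stepfunctionUPwithrestriction}, adapting the Donoho--Stark uniqueness scheme to the effective-support setting. Suppose two admissible signals produced the same observations: let $f$ be the true Dirac comb and let $g\in\cD_{\delta,M,\gamma}$ be any Dirac comb sharing the known $p$-effective mass of $f$ and satisfying $\widehat g(m)=\widehat f(m)$ for every $m\notin S$. Writing $h=f-g$, the agreement of the observed frequencies forces $\widehat h$ to be supported in $S$. The goal is to show that the hypothesis \eqref{stepfunctionrecoverycondition} or \eqref{ineq:stepfunctionUPwithrestrictionequation} forces $h\equiv 0$, i.e.\ $g=f$, which is exactly unique recovery.

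First I would record two structural facts about $h$. Since $f$ and $g$ each take at most $\gamma$ distinct nonzero values (together with the value $0$), their difference takes at most $(\gamma+1)^2-1=\gamma^2+2\gamma$ distinct nonzero values, so $h$ is itself a Dirac comb of complexity $\gamma_h\le \gamma^2+2\gamma$; moreover each nonzero value of $h$, being a difference of admissible values, has modulus at least $\delta$ and at most $2M$. Applying the uncertainty principle to $h$ then yields a lower bound on its $p$-effective support $B_h$: writing $\Sigma_h=\mathrm{supp}(\widehat h)\subseteq S$, in the case $p=2$ Theorem \ref{stepfunctionUP} gives $|B_h|\,|S|\ge |B_h|\,|\Sigma_h|\ge N^d/\gamma_h$, while in the presence of a $(p,q)$-restriction estimate for $S$ Theorem \ref{stepfunctionUPwithrestriction} gives $|B_h|^{1/p}|S|\ge N^d/(C_{p,q}\gamma_h^{1/p})$.

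Next I would produce a matching upper bound on $|B_h|$ from the known $p$-effective mass $m_{\mathrm{eff}}=|a_1|^p|A_1|$. By Lemma \ref{effectivesupportreordering} both $f$ and $g$ are $L^p$-concentrated on their effective supports at level $\gamma^{1/p}$, so $\sum_x|f(x)|^p$ and $\sum_x|g(x)|^p$ are each at most $\gamma\,m_{\mathrm{eff}}$; the triangle inequality then bounds $\sum_x|h(x)|^p$ by a constant multiple of $\gamma\,m_{\mathrm{eff}}$. Since the $p$-effective mass of $h$ never exceeds $\sum_x|h(x)|^p$ and its effective weight has modulus at least $\delta$, this controls $|B_h|$ from above by a constant multiple of $\gamma\,m_{\mathrm{eff}}/\delta^p$, and finally $m_{\mathrm{eff}}\le M^p|A_1|$ converts this into a bound of the form $|B_h|\lesssim (M/\delta)^p|A_1|$. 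Combining the two bounds on $|B_h|$ and solving for $|A_1|^{1/p}|S|$ produces a threshold of the shape appearing in \eqref{stepfunctionrecoverycondition} (case $p=2$, with $\delta/M$ entering squared since the $p=2$ principle carries no $1/p$ exponent) and in \eqref{ineq:stepfunctionUPwithrestrictionequation} (the restriction case); under the stated hypotheses this threshold is violated unless $h\equiv 0$.

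The main obstacle, and the reason the effective-mass hypothesis is indispensable, is that effective support is not subadditive: unlike in the classical argument one cannot bound $\mathrm{supp}(h)$ by $|A_1(f)|+|A_1(g)|$, and indeed $f-g$ need not be concentrated on any small set a priori. The resolution is to exploit that $h$ is again a Dirac comb of controlled complexity $\gamma_h\le\gamma^2+2\gamma$, so the pigeonholing of Lemma \ref{effectivesupportreordering} and the uncertainty principle apply directly to $h$; the separation $\delta$ between admissible values is what converts the $L^p$-mass bound on $h$ into a cardinality bound on $B_h$, while the uniform bound $M$ enters through $m_{\mathrm{eff}}\le M^p|A_1|$. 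Keeping careful track of these $\gamma$-dependent factors is the delicate quantitative step, and it is here that the factor $\gamma^2+2\gamma$ and the ratio $\delta/M$ in the recovery conditions arise.
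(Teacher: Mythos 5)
Your overall architecture matches the paper's proof: contradiction via a second admissible comb $g$ with the same $p$-effective mass, passage to $h=f-g$ with $\widehat h$ supported in $S$, the decomposition of $h$ into at most $\gamma^2+2\gamma$ disjoint indicator pieces, and an application of Theorems \ref{stepfunctionUP} and \ref{stepfunctionUPwithrestriction} to $h$ followed by $|c_1|\ge\delta$, $|a_1|\le M$. But there is a genuine quantitative gap in your upper bound on the effective support of $h$, and with your bound the argument does not contradict the stated thresholds. You control the effective mass of $h$ through the \emph{total} mass: Lemma \ref{effectivesupportreordering} gives $\sum_x|f(x)|^p\le\gamma m_{\mathrm{eff}}$ and $\sum_x|g(x)|^p\le\gamma m_{\mathrm{eff}}$, and the triangle inequality then gives $\sum_x|h(x)|^p\le 2^p\gamma m_{\mathrm{eff}}$, whence $|B_h|\le 2^p\gamma(M/\delta)^p|A_1|$. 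This route costs an extra factor of $\gamma$ (the concentration level in Lemma \ref{effectivesupportreordering}). Feeding it into the uncertainty principles yields only $|A_1|\cdot|S|\ge \frac{N^d}{4\gamma(\gamma^2+2\gamma)}\left(\frac{\delta}{M}\right)^2$ in case (1), and $|A_1|^{\frac{1}{p}}\cdot|S|\ge \frac{N^d}{2C_{p,q}\gamma^{\frac{1}{p}}(\gamma^2+2\gamma)^{\frac{1}{p}}}\left(\frac{\delta}{M}\right)$ in case (2) --- weaker by a factor $\gamma$ (resp.\ $\gamma^{1/p}$) than what is needed. If, say, $|A_1|\cdot|S|$ lies between $\frac{N^d}{4\gamma(\gamma^2+2\gamma)}\left(\frac{\delta}{M}\right)^2$ and the hypothesis threshold in \eqref{stepfunctionrecoverycondition}, your argument produces no contradiction, so the theorem as stated is not proved. (A small related slip: the hypothesis to contradict in case (2) is \eqref{stepfunctionrecoveryconditionwithrestriction}, not the uncertainty-principle inequality \eqref{ineq:stepfunctionUPwithrestrictionequation} that you cite.)

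The paper avoids the $\gamma$ loss by bounding the effective mass of $h$ \emph{piecewise} rather than through the global $L^p$ norm. Each piece of $h$ has mass of one of the forms $|a_i-b_j|^p|A_i\cap B_j|$, $|a_i|^p|A_i\setminus B|$, $|b_j|^p|B_j\setminus A|$; using $|A_i\cap B_j|\le\min(|A_i|,|B_j|)$ and $|a_i-b_j|\le 2\max(|a_i|,|b_j|)$, each is at most $2^p\max\bigl(|a_i|^p|A_i|,\,|b_j|^p|B_j|\bigr)\le 2^p|a_1|^p|A_1|$, where the last inequality uses the maximality of $|a_1|^p|A_1|$ among the pieces of $f$ together with the hypothesis $|b_1|^p|B_1|=|a_1|^p|A_1|$. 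In particular the effective mass of $h$ satisfies $|c_1|^p|C_1|\le 2^p|a_1|^p|A_1|$ with no $\gamma$ factor, which is exactly what produces the constants $4(\gamma^2+2\gamma)$ and $2C_{p,q}(\gamma^2+2\gamma)^{\frac{1}{p}}$ in the theorem. This is where the known effective mass does its real work --- comparing individual pieces of $g$ against the top piece of $f$ --- not merely bounding total $L^p$ norms as in your sketch; replacing your mass-comparison paragraph with this piecewise estimate repairs the proof.
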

%\begin{corollary}\label{whenGammaIsLarge}
%    In the case where $\gamma\geq (N^{d}+1)^\frac{1}{2}-1$, the above recovery conditions are
%    \begin{equation}\label{stepfucntionrecoveryconditionwithbiggamma}
%        \abs{A_1}\cdot\abs{S}<\frac{1}{4}\left(\frac{\delta}{M}\right)^2,
%    \end{equation}
%    and with restriction,
%    \begin{equation}\label{stepfunctionrecoveryconditionwithrestrictionandlargegamma}
%        |A_1|^\frac{1}{p}\cdot|S| < \frac{1}{2C_{p,q}N^\frac{1}{p}} \left(\frac{\delta}{M} \right).
%    \end{equation}
%\end{corollary}
\vskip.125in

\begin{remark}
    We note that while (\ref{stepfunctionrecoverycondition}) and (\ref{stepfunctionrecoveryconditionwithrestriction}) hold for all Dirac combs of complexity $\gamma$, the results are meaningful only when the right-hand side of   both inequalities are greater than or equal to $1$. To that end, solving for $\gamma$ tells us that in 
    %{\color{cyan} I don't find algebraic manipulation to sit in my brain correctly, would it suffice to say, solving for \(\gamma\) -kelvin}
    (\ref{stepfunctionrecoverycondition}), we  assume that 
    $$\gamma\leq\sqrt{\frac{N^d}{4}\cdot\left(\frac{\delta}{M}\right)^2+1}-1,\quad \text{and} \quad \frac{\delta}{M}\geq  \left(\frac{N^d}{12}\right)^{-\frac{1}{2}},$$
    while in (\ref{stepfunctionrecoveryconditionwithrestriction}),  it is assumed  that 
    $$\gamma\leq\sqrt{\left[\frac{N^d}{2C_{p,q}}\cdot\frac{\delta}{M}\right]^p+1}-1,\quad \text{and}\quad  \frac{\delta}{M} \geq \left(2\cdot N^{-d}\cdot C_{p,q}\cdot3^{\frac{1}{p}}\right).$$
\end{remark}

\vskip.125in 
\begin{comment}

\color{blue}
\begin{remark}
    To recover the original signal, we apply a similar recovery algorithm to that found in section 5 of \cite{DS89}. We proceed for the case where no $(p,q)$-restriction estimate holds for $S$.
    \textcolor{red}{I feel like these steps are just kinda weird, not content-wise, but we started with a remark and just went into an enumeration. I'm not sure if theses a better way you want to say (Step 1), 2 and 3. Reading straight through this part kinda breaks my flow - Kelvin}
    \begin{enumerate}
        \item[(Step 1)] Represent our received signal $r(x)$ as $(P_{S^C}f)(x)$, where $P_{S^C}$ denotes the frequency-limiting operator.
        \item[(Step 2)] Solve the least squares problem $\min\{\norm{r-P_{S^C}s'}:P_\tau s'=s'\}$ for every $\tau\subset \bZ_N^d$ of size at most $\abs{A_1}$, and denote by $s_\tau$ the signal which minimizes the above expression for a given subset $\tau$. In the case where a $(p,q)$-restriction estimate holds for $S$, consider instead all subsets $\tau$ of size at most $\left\lceil\abs{A_1}^{\frac{1}{p}}\right\rceil$.
        
        \item[(Step 3)] Compute $\Tilde{s} =\arg\underset{s_\tau}{\min}\norm{r-P_{S^C}s_\tau}$ and conclude that $f=\Tilde{s}$.
    \end{enumerate}

    This algorithm is extremely computationally expensive for large $N$ (see \cite{DS89} section 5 again for details), but does assure the exact signal is recovered.
\end{remark}
\color{black}
\end{comment}
\vskip.125in 

\begin{remark}
   To recover the original signal, we use a recovery algorithm similar to the one described in Section 5 of \cite{DS89}, which employs the frequency-limiting operator. For a frequency index set \( \Sigma \), the {\it frequency-limiting} operator \( P_\Sigma: L^2(\mathbb{Z}_N^d) \to L^2(\mathbb{Z}_N^d) \) is a projection map onto the space of all signals with frequency support in \( \Sigma \),  given by 
\[
(P_{\Sigma} f)(x) = \sum_{m \in \Sigma} \chi(x \cdot m) \widehat{f}(m).
\]

   Briefly, we solve the least squares problem $\min\{\norm{P_{S^C}f-P_{S^C}s'}:P_\tau s'=s'\}$ for every $\tau\subset\bZ_N^d$ with $\abs{\tau}=\abs{A_1}$, and let $s_\tau$ be the signal which solves the problem for a given $\tau$. After each $s_\tau$ has been calculated, compute $\Tilde{s}=\arg\underset{s_\tau}{\min}\norm{P_{S^C}f-P_{S^C}s_\tau}$ and conclude that $f=\Tilde{s}$.

    This algorithm is  computationally expensive for large for large \( N \), as the run time grows super-exponentially (see Section 5 of \cite{DS89} for details), but it does ensure that the original signal is recovered exactly.

\end{remark}

\subsection{Signal Recovery for Dirac Combs by the Direct Rounding Algorithm} Under the additional assumption that the coefficient set $\{\alpha_i\}_{i\in I}$ is known, we can apply the DRA algorithm, detailed in \cite{IM24}, to give the following improved exact recovery conditions for Dirac combs with known effective support.
% reference DRA paper here

\begin{theorem}\label{recoveryWithKnownCoefficients}
    Let $f:\bZ_N^d\to\bC$ be a Dirac comb of complexity $\gamma$ with parameters $\delta$ and $M$, and suppose that $f$ has $p$-effective support $A_1$, $p<\infty$. Suppose the coefficient set $\{\alpha_i\}_{i\in I}$ is known and that $\widehat{f}$ is transmitted but the frequencies $\{\widehat{f}(m)\}_{m\in S}$ are lost for some $S\subseteq\bZ_N^d$. Then $f$ is uniquely recoverable in the following cases:
    \begin{enumerate}
        \item\label{1} if $p=2$, then $f$  can be recovered if 
        \begin{equation}\label{knownCoefficientsAnd2EffectiveSupport}
        |A_1|\cdot|S|<\frac{N^d}{4\gamma}\left(\frac{\delta}{M}\right)^2.
    \end{equation}
        \item\label{2} if $p=1$, then the recovery of $f$ is  possible if 
\begin{equation}\label{knownCoefficientsAnd1EffectiveSupport}
            |A_1|\cdot|S|<\frac{N^d}{2\gamma}\left(\frac{\delta}{M}\right).
        \end{equation}
        \item\label{3} for any $p$, if a $(p,q)$-restriction holds for some $1 \leq p \leq q$, for the set $S$, then $f$ can be  recovered  if
        \begin{equation}\label{knownCoefficientsAndpqRestriction}
            |A_1|^{\frac{1}{p}}\cdot|S|<\frac{N^d}{2C_{p,q}\gamma^\frac{1}{p}}\left(\frac{\delta}{M}\right).
        \end{equation}
    \end{enumerate}
\end{theorem}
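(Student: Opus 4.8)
The plan is to realize the Direct Rounding Algorithm explicitly and reduce the uniqueness of recovery to a single $\ell^\infty$ estimate. From the transmitted frequencies $\{\widehat f(m)\}_{m\in S^c}$, with $S^c := \bZ_N^d\setminus S$, I would form the naive reconstruction $\widetilde f := P_{S^c}f = f - P_S f$, which is computable from the observed data alone. Since every value $f(x)$ lies in the known alphabet $\{\alpha_i\}_{i\in I}$ and distinct alphabet values are separated by at least $\delta$, it suffices to show that the reconstruction error $\widetilde f(x)-f(x) = -(P_S f)(x)$ satisfies $\|P_S f\|_\infty < \delta/2$ everywhere: then for any alphabet value $\beta\neq f(x)$ the triangle inequality gives $|\widetilde f(x)-\beta|\geq |f(x)-\beta|-|(P_S f)(x)| > \delta - \delta/2 > |\widetilde f(x)-f(x)|$, so $f(x)$ is the unique nearest alphabet value and rounding recovers $f$ exactly. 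Thus the whole theorem reduces to proving the pointwise bound $\|P_S f\|_\infty < \delta/2$ under each of the three hypotheses.

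Next I would estimate $\|P_S f\|_\infty$ by passing from the frequency side to the physical side. Since $(P_S f)(x)=\sum_{m\in S}\chi(x\cdot m)\widehat f(m)$, we always have $\|P_S f\|_\infty \leq \sum_{m\in S}|\widehat f(m)|$, and the three cases differ only in how this $\ell^1(S)$ sum is controlled, writing $\|f\|_p := (\sum_x |f(x)|^p)^{1/p}$. For $p=1$, the bound $|\widehat f(m)|\leq N^{-d}\|f\|_1$ gives $\sum_{m\in S}|\widehat f(m)|\leq |S|N^{-d}\|f\|_1$. For $p=2$, Cauchy--Schwarz followed by the Plancherel identity gives $\sum_{m\in S}|\widehat f(m)|\leq |S|^{1/2}\big(\sum_{m\in S}|\widehat f(m)|^2\big)^{1/2}\leq |S|^{1/2}N^{-d/2}\|f\|_2$. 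For general $p$, Hölder on $S$ turns the sum into $|S|^{1-1/q}\big(\sum_{m\in S}|\widehat f(m)|^q\big)^{1/q}$, and the $(p,q)$-restriction estimate \eqref{restrictionequation} bounds this by $|S|\,C_{p,q}N^{-d}\|f\|_p$.

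The remaining ingredient is to replace $\|f\|_p$ by a quantity involving only $|A_1|$, $\gamma$, and $M$, and this is exactly where the effective-support machinery enters. By Lemma \ref{effectivesupportreordering}, $f$ is $L^p$-concentrated on its $p$-effective support $A_1$ at level $\gamma^{1/p}$, so $\|f\|_p^p \leq \gamma\sum_{x\in A_1}|f(x)|^p = \gamma|a_1|^p|A_1| \leq \gamma M^p|A_1|$, using that $f\equiv a_1$ on $A_1$ and $|a_1|\leq M$. Hence $\|f\|_p\leq \gamma^{1/p}M|A_1|^{1/p}$. Feeding this into the three frequency estimates above and imposing $\|P_S f\|_\infty<\delta/2$ yields, after squaring in the $p=2$ case and rearranging in the others, precisely the three stated conditions \eqref{knownCoefficientsAnd2EffectiveSupport}, \eqref{knownCoefficientsAnd1EffectiveSupport}, and \eqref{knownCoefficientsAndpqRestriction}.

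I expect the only genuinely delicate point to be the rounding/uniqueness step rather than the estimates, which are routine once assembled. The crux is the observation that knowing the alphabet converts the recovery problem into controlling the $\ell^\infty$ error of a single explicit reconstruction, together with the fact that the separation $\delta$ makes nearest-value rounding well defined; I must verify carefully that $\|P_S f\|_\infty<\delta/2$ forces a unique nearest alphabet value at every point, as in the triangle-inequality computation above. It is worth emphasizing that the algorithm itself requires only the alphabet, not $A_1$: the effective support enters solely through the sufficient condition, so the whole argument is constructive and the improvement over Theorem \ref{stepfunctionrecovery} comes precisely from avoiding the passage to $f-g$ (whose complexity can grow like $\gamma^2$) in favor of the direct $\ell^\infty$ control of $P_S f$.
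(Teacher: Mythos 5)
Your proposal is correct and follows essentially the same route as the paper's proof: your $(P_S f)(x)$ is exactly the paper's term $II(x)$ in the Fourier-inversion split $f = I + II$, and your three bounds (triangle inequality for $p=1$, Cauchy--Schwarz plus Plancherel for $p=2$, H\"older plus the $(p,q)$-restriction estimate in general), combined with the $L^p$-concentration $\|f\|_p^p \leq \gamma |a_1|^p |A_1| \leq \gamma M^p |A_1|$ and the threshold $\|P_S f\|_\infty < \delta/2$ for nearest-alphabet rounding, reproduce the paper's argument step for step. Your only addition is to spell out the triangle-inequality verification that $\delta$-separation makes the rounding unique, which the paper asserts without detail.
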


\vskip.125in 

\begin{comment}
    {\color{purple} Would it be worthwhile to add a lemma which states that $(1,q)$-restriction holds for every set $\Sigma$ with $C_{1,q}=1$? Then we could cite that lemma here as well as in Corollary \ref{retrival}}
\end{comment}

\begin{remark} Note that (\ref{knownCoefficientsAndpqRestriction}) implies (\ref{knownCoefficientsAnd1EffectiveSupport}) since the restriction always holds when $p=1$ with $C_{p,q}=1$. (See the proof of Corollary \ref{retrival} for details.) \end{remark}

\vskip.25in 

%{\color{red} Would it be beneficial to add a remark somewhere in this section about how in the practical setting, this result yields a recovery algorithm that is (I believe, and someone who has more experience with algorithms can correct me if I am wrong) a good bit faster than the LSR required for the previous results? I think it would be nice to highlight that the additional assumption we made, while strong, yields both a better recovery condition and a faster algorithm. -Gio}

\section{Proof of Theorems}\label{proofs}

%{\color{red} You have used the Plancherel (or Parseval) identity in several places in the proofs without introducing it. I suggest that you mention it up front.}  {\color{blue} Done -Kelvin} Thanks :) 

\subsection{Proof of Theorem \ref{stepfunctionUP}}

Let $f:\bZ_N^d\to\bC$ be a Dirac comb of complexity $\gamma$. Let $\Sigma$ denote  the support of $\widehat{f}$,  and  let $A_1$ represents  the $2$-effective support of $f$. 
We then have  
\begin{align*}
    |f(x)|^2 &= \left| \sum_{m\in \Sigma} \chi(x\cdot m) \widehat{f}(m)\right|^2 \\
    &\leq |\Sigma| \sum_{m\in\Sigma} |\widehat{f}(m)|^2 \\
    &= |\Sigma| \sum_{m\in \bZ_N^d} |\widehat{f}(m)|^2 \\
    &= |\Sigma| N^{-d} \sum_{y\in \bZ_N^d} |f(y)|^2 
   %\quad  \text{(by the Plancherel theorem)}
    \\
    &\leq |\Sigma| N^{-d} \gamma \sum_{y\in A_1} |f(y)|^2,% \quad \text{(by the $2$-effective support assumption)}.
\end{align*} where the second line follows by Cauchy-Schwarz, the fourth line follows by Plancherel, and the fifth line follows by the effective support assumption. 

Note that the right side does not depend on $x$. Therefore, by summing both sides  over $x\in A_1$, we   obtain
$$\sum_{x\in A_1} |f(x)|^2 \leq |A_1||\Sigma|N^{-d} \gamma \sum_{y\in A_1} |f(y)|^2.$$
By the assumption $f\neq 0$, we can rearrange to obtain
$$|A_1|\cdot |\Sigma| \geq \frac{N^d}{\gamma},$$
as desired.  \qed

\subsection{Proof of Theorem \ref{stepfunctionUPwithrestriction}}

Let $f:\bZ_N^d\to\bC$ be a nonzero Dirac comb of complexity $\gamma$.   Let $\Sigma$ be the support of $\widehat{f}$, and let  $A_1$ denote  the $p$-effective support of $f$. Suppose that a $(p,q)$-restriction estimate holds for $\Sigma$. 
We then have  the following: 
\begin{align*}
    |f(x)| &= \left| \sum_{m\in \Sigma} \chi(x\cdot m) \widehat{f}(m) \right| \\%& \\
    &\leq |\Sigma| \left(\frac{1}{|\Sigma|} \sum_{m\in \Sigma} |\widehat{f}(m)|^q \right)^\frac{1}{q} \qquad 
    & \quad  \\
    &\leq |\Sigma| C_{p,q} N^{-d} \left(\sum_{y\in\bZ_N^d} |f(y)|^p \right)^\frac{1}{p} \qquad 
    \\ %& \quad (\text{by the $(p,q)$-restriction assumption})  \\
    &\leq |\Sigma| C_{p,q} N^{-d} \left(\gamma \sum_{y\in A_1} |f(y)|^p \right)^\frac{1}{p}, %&
  %\quad (\text{by the $p$-effective support assumption})
  \end{align*} where the second line follows by Holder's inequality, the third line follows from the $(p,q)$ restriction assumption, and the fourth line follows from the effective support assumption. 
  
Raising both sides to the $p$th power, and then summing both side of the inequalities  over $x\in A_1$, we obtain  
$$\sum_{x\in A_1} |f(x)|^p \leq |A_1| |\Sigma|^p C_{p,q}^p N^{-dp} \gamma \sum_{y\in A_1} |f(y)|^p.$$
Since $f$ is not identically zero, we can rearrange to get
$$|A_1|^\frac{1}{p}\cdot|\Sigma| \geq \frac{N^d}{C_{p,q}\gamma^\frac{1}{p}},$$
as desired.  \qed

\subsection{Proof of Corollary \ref{retrival}}\label{proofofretrieval}
% By simply applying the triangle inequality to the Fourier transform formula, we see that a $(1,q)$-restriction holds over every set $\Sigma$ with $C_{1,q}=1$. 

Let $f:\bZ_N^d\to\bC$, $q>1$, and let $\Sigma\subseteq \bZ_N^d$. By the definition of the Fourier transform we have
\begin{align*}
    |\widehat{f}(m)| &= \left| N^{-d} \sum_{x\in\bZ_N^d} \chi(-x\cdot m) f(x) \right| \\
    &\leq N^{-d} \sum_{x\in\bZ_N^{d}} |f(x)|.
\end{align*}
Raising both sides to the $q$th power gives
$$|\widehat{f}(m)|^q \leq N^{-dq} \left(\sum_{x\in\bZ_N^d} |f(x)| \right)^q.$$
Summing both sides over $m\in\Sigma$ and noting the right side no longer depends on $m$ gives
$$\sum_{m\in\Sigma} |\widehat{f}(m)|^q \leq |\Sigma| N^{-dq} \left(\sum_{x\in\bZ_N^d} |f(x)| \right)^q,$$
and we can rearrange this to 
$$\frac{1}{|\Sigma|}\sum_{m\in\Sigma} |\widehat{f}(m)|^q \leq N^{-dq} \left(\sum_{x\in\bZ_N^d} |f(x)| \right)^q.$$
Taking $q$th roots gives that
$$\left(\frac{1}{|\Sigma|} \sum_{m\in\Sigma}|\widehat{f}(m)|^q \right)^{\frac{1}{q}} \leq N^{-d} \sum_{x\in\bZ_N^d} |f(x)|.$$
Thus, a $(1,q)$-restriction holds for every $\Sigma$ with $C_{1,q}=1$.

Thus, by Theorem \ref{stepfunctionUPwithrestriction}, if $f$ is a nonzero Dirac comb of complexity $\gamma$ with parameters $\delta$ and $M$, and if $\widehat{f}$ is supported in $\Sigma$ and $A_1$ is the $1$-effective support of $f$, then
$$|A_1|\cdot|\Sigma| \geq \frac{N^d}{\gamma}$$ 
as desired. \qed
\begin{comment}{\color{red} This is not quiet correct. It only holds for all the functions with $\hat f$ support in $\Sigma$. This can be formulated as follows: Assume that the support of $\hat f$ has $(1,q)$-restriction estimation with $c=1$. } 
    {\color{blue}  This needs some work.Cheers, Azita.}
    
\red{Has this been resolved yet? --Gio}
\end{comment}

\subsection{Proof of Theorem \ref{stepfunctionrecovery}}

Let $f:\bZ_N^d\to\bC$ be a Dirac comb of complexity $\gamma$ with parameters $\delta$ and $M$, and suppose that  the frequencies $\{\widehat{f}(m)\}_{m\in S}$ are lost for some $S\subseteq \bZ_N^d$.

\vskip.125in 

To prove the first recovery condition (\ref{stepfunctionrecoverycondition}), suppose that $A_1$ is the $2$-effective support of $f$ and $a_1$ is the $2$-effective height of $f$.

Suppose for the sake of contradiction that $f$ is not exactly recoverable. Then there exists some Dirac comb $g = \sum_{j=1}^\gamma b_j 1_{B_j}$, where each $b_j$ is chosen from the same set $\{\alpha_i\}_{i\in I}$ as the coefficients $a_i$ of $f$, such that $\widehat{f}(m) = \widehat{g}(m)$ for $m\notin S$, yet $g\not = f$. Furthermore, if $b_1$ is the $2$-effective weight of $g$ and $B_1$ is the $2$-effective support of $g$, we must also have $|b_1|^2|B_1| = |a_1|^2|A_1|$.

Let $h = f-g$ and let $A = A_1\cup\dots\cup A_\gamma$, $B = B_1\cup \dots \cup B_\gamma$. Note that we can write $h$ as the following linear combination of disjoint indicator functions:
$$h = \sum_{i=1}^\gamma \sum_{j=1}^\gamma (a_i-b_j) 1_{A_i\cap B_j} + \sum_{i=1}^\gamma a_i 1_{A_i\setminus B} - \sum_{j=1}^\gamma b_j 1_{B_j\setminus A}.$$
Renaming these terms, we have that
$$h = \sum_{k=1}^n c_k 1_{C_k}$$
with $n\leq \gamma^2 + 2\gamma$, $C_k \in \{ A_i\cap B_j, A_i\setminus B, B_j\setminus A\}$,  and $c_k\in \{a_i-b_j, a_i, b_j\}$ for some $i$ and $j$.

Note that $\widehat{h}$ is supported in a subset of $S$. Thus, if $h$ has $2$-effective weight $c_1$ and $2$-effective support $C_1$, then by Theorem \ref{stepfunctionUP} we have
$$|C_1|\cdot|S| \geq \frac{N^d}{\gamma^2+2\gamma}$$

Since we have information about the $2$-effective mass of $f$ and $g$, we multiply both sides of the above inequality by $|c_1|^2$ to obtain
\begin{equation}\label{heffectivemassequation}
    |c_1|^2|C_1||S| \geq \frac{N^d}{\gamma^2+2\gamma}|c_1|^2.
\end{equation}

Note then that $|c_i|^2|C_i|$ is either of the form $|a_i-b_j|^2|A_i\cap B_j|$, $|a_i|^2|A_i\setminus B|$, or $|b_j|^2|B_j\setminus A|$, and thus we see that $|c_i|^2|C_i| \leq 4|a_1|^2|A_1|$ for each $i$.

Hence, we can bound the left side of (\ref{heffectivemassequation}) above to obtain
$$4|a_1|^2|A_1||S| \geq \frac{N^d}{\gamma^2+2\gamma}|c_1|^2,$$
and rearranging gives
$$|A_1||S| \geq \frac{N^d}{4(\gamma^2+2\gamma)} \left( \frac{|c_1|}{|a_1|} \right)^2.$$

Lastly, since $c_1$ is either $a_i-b_j$, $a_i$, or $-b_j$ for some $i,j$, we have that $|c_1| \geq \delta$ and $|a_1| \leq M$, and this gives
$$|A_1||S| \geq \frac{N^d}{4(\gamma^2+2\gamma)} \left( \frac{\delta}{M} \right)^2.$$
This contradicts our assumption (\ref{stepfunctionrecoverycondition}), and thus $f$ must be exactly recoverable.

\vskip.125in 

We next prove the recovery condition (\ref{stepfunctionrecoveryconditionwithrestriction}). Again, if $f$ is not exactly recoverable, then there exists some $g=\sum_{j=1}^\gamma b_j 1_{B_j}$ such that $\hat{g}(m)=\hat{f}(m)$ for $m\notin S$, the $p$-effective mass of $g$ equals the $p$-effective mass of $f$ (i.e.$|b_1|^p|B_1|=|a_1|^p|A_1|$), and yet $g\neq f$.

We again let $h=f-g$, and now apply Theorem \ref{stepfunctionUPwithrestriction} to obtain
$$|C_1|^\frac{1}{p}|S| \geq \frac{N^d}{C_{p,q}(\gamma^2+2\gamma)^\frac{1}{p}}.$$

Multiplying both sides of the above inequality by $|c_1|$, we have
$$|c_1||C_1|^\frac{1}{p}|S| \geq \frac{N^d}{C_{p,q}(\gamma^2+2\gamma)^\frac{1}{p}}|c_1|.$$
and raising both sides to the $p$th power gives us
$$|c_1|^p|C_1||S|^p \geq \frac{N^{dp}}{C_{p,q}^p(\gamma^2+2\gamma)}|c_1|^p.$$

Similarly to the previous case, for each $i$, $1\leq i\leq n$, we can bound $|c_i|^p|C_i|$ from above by $2^p|a_1|^p|A_1|$. In particular, this can do this when $i=1$. This gives
$$2^p|a_1|^p|A_1||S|^p \geq \frac{N^{dp}}{C_{p,q}^p(\gamma^2+2\gamma)}|c_1|^p.$$

Taking the $p$th root and rearranging, we have
$$|A_1|^\frac{1}{p}|S| \geq \frac{N^d}{2C_{p,q}(\gamma^2+2\gamma)^\frac{1}{p}} \left( \frac{|c_1|}{|a_1|} \right).$$

We again have that $|c_1| \geq \delta$ and $|a_1| \leq M$, and thus
$$|A_1|^\frac{1}{p}|S| \geq \frac{N^d}{2C_{p,q}(\gamma^2+2\gamma)^\frac{1}{p}} \left( \frac{\delta}{M} \right).$$
This contradicts our assumption (\ref{stepfunctionrecoveryconditionwithrestriction}), and thus $f$ must be exactly recoverable. \qed

%\subsection{Proof of Corollary \ref{whenGammaIsLarge}} The proof follows exactly as in the proof for Theorem \ref{stepfunctionrecovery}, except we notice that the complexity of $h$ can not exceed $N^d$.
\subsection{Proof of Theorem \ref{recoveryWithKnownCoefficients}}
Let $f:\bZ_N^d\to\bC$ be a Dirac comb of complexity $\gamma$ with parameters $\delta$ and $M$. Suppose the coefficient set $\{\alpha_i\}_{i\in I}$ is known and that $\widehat{f}$ is transmitted but the frequencies $\{\widehat{f}(m)\}_{m\in S}$ are lost for some $S\subseteq\bZ_N^d$.

To prove the first recovery condition (\ref{knownCoefficientsAnd2EffectiveSupport}), suppose that $A_1$ is the $2$-effective support of $f$. We write $f$ using Fourier inversion as
\begin{equation*}
\begin{split}
    f(x)&=\sum_{m\not\in S}\chi(x\cdot m)\widehat{f}(m)+\sum_{m\in S}\chi(x\cdot m)\widehat{f}(m)\\&=I(x)+II(x).
\end{split}
\end{equation*}
We have 
\begin{align*}
    \abs{II(x)}&\leq\abs{S}^{\frac{1}{2}}\cdot\left(\sum_{m\in S}\abs{\widehat{f}(m)}^2\right)^{\frac{1}{2}}\\
    &\leq\abs{S}^{\frac{1}{2}}\cdot\left(\sum_{m\in \bZ_N^d}\abs{\widehat{f}(m)}^2\right)^{\frac{1}{2}}\\
    &\leq\abs{S}^{\frac{1}{2}}\cdot N^{-\frac{d}{2}}\cdot\left(\sum_{x\in \bZ_N^d}\abs{f(x)}^2\right)^{\frac{1}{2}},
\end{align*} where the second line follows by Cauchy-Schwarz, and the third line follows by Plancherel. 

By the assumption of effective support, the right-hand side of the above expression is bounded by 
\vspace{-5pt}
\begin{equation*}
    \abs{S}^{\frac{1}{2}}\cdot N^{-\frac{d}{2}}\cdot\sqrt{\gamma}\cdot\left(\sum_{x\in A_1}\abs{f(x)}^2\right)^{\frac{1}{2}}\leq\abs{S}^{\frac{1}{2}}\cdot N^{-\frac{d}{2}}\cdot\sqrt{\gamma}\cdot\abs{A_1}^{\frac{1}{2}}\cdot M.
\end{equation*}
Since $\abs{\alpha_i-\alpha_j}\geq\delta$ for all $i\neq j$, if $\abs{II(x)}<\frac{\delta}{2}$, it is sufficient  to compute $I(x)$ and round it to the nearest element in $\{\alpha_i\}_{i\in I}$. 
Letting the above be bounded by $\frac{\delta}{2}$ and rearranging gives us (\ref{knownCoefficientsAnd2EffectiveSupport}) as desired.

\vskip.125in 

We next prove the recovery condition (\ref{knownCoefficientsAnd1EffectiveSupport}). Suppose instead that $A_1$ is the $1$-effective support of $f$. We write $f(x)=I(x)+II(x)$ as above. We have, by the  triangle inequality, that
\vspace{-5pt}
\begin{equation*}
    \abs{II(x)}\leq\abs{S}\cdot N^{-d}\cdot \sum_{x\in\bZ_N^d} |f(x)|,
\end{equation*}
which by assumption is bounded by
\begin{equation*}
    \abs{S}\cdot N^{-d}\cdot\gamma\cdot\sum_{x\in A_1} |f(x)|\leq \abs{S}\cdot N^{-d}\cdot\gamma\cdot\abs{A_1}\cdot M.
\end{equation*}
Letting the above be bounded by $\frac{\delta}{2}$ and rearranging gives us (\ref{knownCoefficientsAnd1EffectiveSupport}) as desired.

\vskip.125in

Finally, we prove the recovery condition (\ref{knownCoefficientsAndpqRestriction}) in a similar fashion to the prior cases. Suppose instead that $A_1$ is the $p$-effective support of $f$, and that $S$ satisfies a $(p,q)$-restriction estimation. We have that
\begin{align*}
    \abs{II(x)}&\leq\abs{S}\cdot \left( \frac{1}{|S|} \sum_{m\in S} |\widehat{f}(m)|^q \right)^\frac{1}{q}\\
    &\leq\abs{S}\cdot C_{p,q}\cdot N^{-d}\cdot \left( \sum_{x\in\bZ_N^d}|f(x)|^p \right)^\frac{1}{p}\\
    &\leq\abs{S}\cdot C_{p,q}\cdot N^{-d}\cdot\gamma^{\frac{1}{p}}\cdot\left( \sum_{x\in A_1}|f(x)|^p \right)^\frac{1}{p}\\
    &\leq\abs{S}\cdot C_{p,q}\cdot N^{-d}\cdot\gamma^{\frac{1}{p}}\cdot\abs{A_1}^{\frac{1}{p}}\cdot M,
\end{align*}
where the first line follows by Holder's inequality, the second line follows by the $(p,q)$ restriction assumption, the third line follows by the effective support assumption, and the fourth line follows by a simple $L^{\infty}$ bound. 

Letting the above be bounded by $\frac{\delta}{2}$ and rearranging gives us (\ref{knownCoefficientsAndpqRestriction}) as desired. \qed

\newpage

 \end{document}